\numberwithin{equation}{section}
\newtheorem{Definition}{Definition}[section]
\newtheorem{Proposition}[Definition]{Proposition}
\newtheorem{Remark}[Definition]{Remark}
\newtheorem{Note}[Definition]{Note}
\title{ \bf \fontsize{21}{25} \selectfont Lowen type multi-fuzzy topological spaces}
\author{\bf \fontsize{14}{17} \selectfont Moumita Chiney and S. K. Samanta\\
\fontsize{12}{15} \selectfont Department of Mathematics, Visva-Bharati,\\ \fontsize{12}{15} \selectfont Santiniketan- 731235, West Bengal, India.\\ \fontsize{12}{15} \selectfont E-mail: moumi.chiney@gmail.com,\;syamal\_123@yahoo.co.in}
\date{}
\begin{document}
\maketitle
\begin{abstract}
In this paper  Lowen type multi-fuzzy topological
space has been introduced and characterization of topology by its nbd system is studied. Also the product multi-fuzzy topological space
has been introduced and it has been investigated that 2nd countability and compactness are finitely productive in multi-fuzzy topological spaces. \\ \\
\textbf{Keywords:} Multi-fuzzy sets; multi-fuzzy topology; multi-fuzzy
product topology; second countable multi-fuzzy topological space; compact multi-fuzzy topological space.\\ \\
{\bf AMS Classification:} 03E72, 54A05.
\end{abstract}

\section{Introduction}

Fuzzy set theory, which was first initiated by Zadeh \cite{zadeh}
in 1965, has become a very important tool to solve many complicated
problems arising in the fields of economics, social sciences, engineering,
medical sciences and so forth, involving uncertainties and provides
an appropriate framework for representing vague concepts by allowing
partial membership. Many researchers have worked on theoretical aspects
and applications of fuzzy set theory over the years, such as fuzzy control
systems, fuzzy logic, fuzzy automata, fuzzy topology, fuzzy topological
groups, fuzzy topological vector spaces, fuzzy differentiation etc.
\cite{chang,ferraro,katsaras,lee,lowen,ming,wong}. A new type of
fuzzy set (multi-fuzzy set ) was introduced in a paper of Sebastian
and Ramkrishnan \cite{sebastian} by ordered sequences of membership
function. This notion of multi-fuzzy sets provides a new method to
represents some problems which are difficult to explain in other extensions
of fuzzy set theory\cite{sebastian2}. The topological structure in
this setting has been defined by Sebastian and Ramkrishnan\cite{sebastian1}.
Also, Dey and Pal \cite{Dey} introduced the notion of multi-fuzzy
complex numbers, multi-fuzzy complex sets, multi-fuzzy vector spaces
etc. In this paper  Lowen type multi-fuzzy topology is introduced and characterization of nbd system is done. Continuity of functions is studied. A notion of product of multi-fuzzy topologies is introduced and productive properties of 2nd countability and compactness are also established.

\section{Preliminaries}
\begin{Definition} \cite{sebastian} \label{mftvs1.1} Let $X$ be
a non-empty set and $\{L_{i}:i\in P\}$ be a family of complete lattices.
A multi-fuzzy set $A$ in $X$ is a set of ordered sequences $A=\{\prec x,\mu_{A_{1}}(x),\mu_{A_{2}}(x),...,\mu_{A_{i}}(x),...\succ:x\in X\},$
where $\mu_{A_{i}}(x)\in L_{i}^{X},$ for $i\in P.$ For the sake
of simplicity we denote the multi- fuzzy set $A=\{\prec x,\mu_{A_{1}}(x),\mu_{A_{2}}(x),...,\mu_{A_{i}}(x),....\succ:x\in X\}$
as $A=\prec\mu_{A_{1}},\mu_{A_{2}},...,\mu_{A_{i}},...\succ.$ The
set of all multi-fuzzy sets in $X$ with $\{L_{i}:i\in P\}$ is denoted
by $\underset{i\in P}{\prod}L_{i}^{X}.$ \end{Definition}
\begin{Definition} \cite{sebastian} \label{mftvs1.2} Let $A,B$ be multi-fuzzy sets in $\underset{i\in P}{\prod}L_{i}^{X}.$ Then we have the
following relations and operations.\begin{itemize}
\item[(i)] $A$ is said to be a multi-fuzzy subset of $B,$ denoted by
$A\sqsubseteq B$, if $\mu_{A_{i}}\leq\mu_{B_{i}},$ for all $i\in P;$
\item[(ii)] $A$ is said to be equal to $B,$ denoted by $A=B$, if $\mu_{A_{i}}=\mu_{B_{i}},$
for all $i\in P;$ 
\item[(iii)] Intersection of $A$ and $B$, denoted by $A\sqcap B,$ is
defined by $\mu_{A\sqcap B}=\prec(\mu_{A_{i}}\wedge\mu_{B_{i}})_{i\in P}\succ;$
\item[(iv)] Union of $A$ and $B$, denoted by $A\sqcup B,$ is defined
by $\mu_{A\sqcup B}$ $=\prec(\mu_{A_{i}}\vee\mu_{B_{i}})_{i\in P}\succ;$
\item[(v)] complement of $A$, denoted by $A^{C}$, is defined by $\mu_{A^C}=\prec(\mu_{A^{C}_{i}})_{i\in P}\succ$, where $\mu_{A^{C}_{i}}$ is the complement of the fuzzy set $\mu_{A_{i}}$;
\item[(vi)] a multi-fuzzy set is said to be null multi-fuzzy set, denoted
by $\bar{\Phi},$ if $\mu_{\bar{\Phi}_{i}}=\bar{0},$ for all $i\in P;$
where $\bar{0}$ is the null fuzzy set;
\item[(vii)] a multi-fuzzy set is said to be absolute multi-fuzzy set,
denoted by $\bar{X},$ if $\mu_{\bar{X}_{i}}=\bar{1},$ for all $i\in P;$
where $\bar{1}$ is the absolute fuzzy set.\end{itemize} \end{Definition}
\begin{Definition} \cite{sebastian1} \label{mftvs1.3} A subset $\delta$
of $\underset{i\in P}{\prod}L_{i}^{X}$ is called a multi-fuzzy topology
on $X,$ if it satisfies the following conditions:\begin{itemize}
\item[(i)] $\bar{\Phi},\bar{X}\in\delta;$
\item[(ii)] the intersection of any two multi-fuzzy sets in $\tau$ belongs
to $\tau$. 
\item[(iii)] the union of any number of multi-fuzzy sets in $\tau$ belongs
to $\tau$. \end{itemize}
\noindent The triplet $(X,\underset{i\in P}{\prod}L_{i}^{X},\delta)$
is called multi-fuzzy topological space.The members of $\delta$ are
said to be $\delta-$ open multi-fuzzy sets or simply open multi-fuzzy
sets in $X.$ A multi-fuzzy set $A\in\underset{i\in P}{\prod}L_{i}^{X}$
is called $\delta-$ closed if and only if its complement $A^{C}$
is $\delta-$ open. \end{Definition}
\begin{Remark} \cite{sebastian}\label{mftvs1.4} If the sequences of membership
functions have only $n-$terms (finite number of terms), $n$ is called
the dimension of $A.$ If $L_{i}=[0,1]$ (for $i=1,2,...n$ ), then
the set of all multi-fuzzy sets in $X$ of dimension $n$ is denoted
by $M^{n}FS(X).$ The multi-membership function $\mu_{A}$ is a function
from $X$ to $I^{n}$ such that for all $x$ in $X,$ $\mu_{A}(x)=\prec\mu_{A_{1}}(x),\mu_{A_{2}}(x),...,\mu_{A_{n}}(x)\succ.$
For the sake of simplicity we denote the multi- fuzzy set $A=\{\prec x,\mu_{A_{1}}(x),\mu_{A_{2}}(x),...,\mu_{A_{n}}(x)\succ:x\in X\}$
as $A=\prec\mu_{A_{1}},\mu_{A_{2}},...,\mu_{A_{n}}\succ.$ 

In this paper $I_{i}=[0,1]$ (for $i=1,2,..n$) and $M^{n}FS(X)$ i.e. the set of all multi-fuzzy sets in $X$ of dimension $n$ is denoted by $\overset{n}{\underset{i=1}{\prod}}I_{i}^{X}$. $\newline$\\
Following Sebastian et al. \cite{sebastian1} some definitions and
preliminary results are presented in the rest part of this section
in our form.\end{Remark}

\begin{Definition} \label{mftvs1.6}Let $X$ and $Y$ be two non-empty sets
and $f:X\rightarrow Y$ be a mapping. Then\begin{itemize}
\item[(i)] the image of a multi-fuzzy set $A$ $\in$ $\overset{n}{\underset{i=1}{\prod}}I_{i}^{X}$
under the mapping $f$ is a multi-fuzzy set $\overset{n}{\underset{i=1}{\prod}}I_{i}^{Y}$,
which is defined by $\mu_{f(A)}(y)=\underset{y=f(x)}{\vee}\mu_{A}(x),$
$A\in\overset{n}{\underset{i=1}{\prod}}I_{i}^{X},y\in Y.$ 

\item[(ii)] the inverse image of a multi-fuzzy set $B$ $\in$
$\overset{n}{\underset{i=1}{\prod}}I_{i}^{Y}$ under the mapping $f$ is a multi
fuzzy set in $\overset{n}{\underset{i=1}{\prod}}I_{i}^{X}$, which is defined
by $\mu_{f^{-1}(B)}(x)=\mu_{B}(f(x)),$ $B\in\overset{n}{\underset{i=1}{\prod}}I_{i}^{Y},x\in X.$
\end{itemize}
\end{Definition}
\begin{Proposition} \label{mftvs1.7} Let $f:X\rightarrow Y$ be a mapping
and $F^{1},F^{2},F^{k}\in\overset{n}{\underset{i=1}{\prod}}I_{i}^{X},$ $G,G^{1},G^{2},G^{k}\in\overset{n}{\underset{i=1}{\prod}}I_{i}^{Y}$,
for $k\in\triangle.$ Then \begin{itemize}
\item[(i)] $f(\bar{\Phi}_{X})=\bar{\Phi}_{Y};$ 
\item[(ii)] $F^{1}\sqsubseteq F^{2}$ implies $f(F^{1})\sqsubseteq f(F^{2});$
\item[(iii)] $f(\underset{k\in\triangle}{\sqcap}F^{k})\sqsubseteq\underset{k\in\triangle}{\sqcap}f(F^{k});$
\item[(iv)] $f(\underset{k\in\triangle}{\sqcup}F^{k})=\underset{k\in\triangle}{\sqcup}f(F^{k});$
\item[(v)] $f^{-1}(\bar{\Phi}_{Y})=\bar{\Phi}_{X}$ and $f^{-1}(\bar{Y})=\bar{X};$
\item[(vi)] $G^{1}\sqsubseteq G^{2}$ implies $f^{-1}(G^{1})\sqsubseteq f^{-1}(G^{2});$
\item[(vii)] $f^{-1}(\underset{k\in\triangle}{\sqcup}G^{k})=\underset{k\in\triangle}{\sqcup}f^{-1}(G^{k});$
\item[(viii)] $f^{-1}(\underset{k\in\triangle}{\sqcap}G^{k})=\underset{k\in\triangle}{\sqcap}f^{-1}(G^{k});$
\item[(ix)]  $f^{-1}(G^{C})=[f^{-1}(G)]^{C};$
\item[(x)] $F^{k}\sqsubseteq f^{-1}(f(F^{k})),$ equality holds if $f$
is injective;
\item[(xi)] $f(f^{-1}(G^{k}))\sqsubseteq G^{k},$ equality holds if $f$
is surjective.\end{itemize}\end{Proposition}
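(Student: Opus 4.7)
The plan is to reduce every one of the eleven assertions to its ordinary fuzzy-set analogue applied componentwise. By Definition~\ref{mftvs1.6}, both the image and the inverse image act coordinate by coordinate: for each $i\in\{1,\dots,n\}$,
\[
\mu_{f(A)_i}(y)=\bigvee_{x\in f^{-1}\{y\}}\mu_{A_i}(x),\qquad \mu_{f^{-1}(B)_i}(x)=\mu_{B_i}(f(x)).
\]
Since the order $\sqsubseteq$, the operations $\sqcap$, $\sqcup$ and the complement on multi-fuzzy sets are all defined coordinatewise, every claim in the proposition is equivalent to $n$ independent claims about ordinary fuzzy images and preimages on $I_i^X$ and $I_i^Y$. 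This reduction is the only non-bookkeeping step, and after it is made the rest is routine verification.

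Having performed the reduction, I would just check each of the eleven single-coordinate statements. Items (v)--(ix) follow immediately because on one coordinate $f^{-1}$ is ordinary precomposition with $f$ and therefore commutes with arbitrary suprema, infima and with the pointwise complement $1-(\cdot)$; in particular (vii), (viii) and (ix) are direct. Items (i) and (ii) hold because $f$ on fuzzy sets is defined as a supremum over an indexed family, hence preserves the zero fuzzy set and is monotone. For (iv) the identity $\bigvee_{f(x)=y}\bigvee_k\mu^k(x)=\bigvee_k\bigvee_{f(x)=y}\mu^k(x)$ yields equality, while for (iii) the corresponding sup--inf swap gives only $\bigvee_{f(x)=y}\bigwedge_k\mu^k(x)\le\bigwedge_k\bigvee_{f(x)=y}\mu^k(x)$, which is exactly the stated inclusion.

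For the asymmetric inclusions (x) and (xi) I would unfold the definitions on a single coordinate. For (x),
\[
\mu_{f^{-1}(f(F^k))_i}(x)=\bigvee_{x'\in f^{-1}\{f(x)\}}\mu_{F^k_i}(x')\ge \mu_{F^k_i}(x),
\]
with equality when $f^{-1}\{f(x)\}=\{x\}$ for every $x$, that is, when $f$ is injective. For (xi),
\[
\mu_{f(f^{-1}(G^k))_i}(y)=\bigvee_{x\in f^{-1}\{y\}}\mu_{G^k_i}(y),
\]
which equals $\mu_{G^k_i}(y)$ when $f^{-1}\{y\}\ne\emptyset$ and is the empty supremum $0$ otherwise, giving $\sqsubseteq$ in general and equality precisely when $f$ is surjective. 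No step is a genuine obstacle; the only point that needs care is the convention $\bigvee\emptyset=0$ used in (xi) when $f$ is not surjective, which is exactly what prevents equality there.
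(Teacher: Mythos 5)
Your proof is correct: the coordinatewise reduction via Definition~\ref{mftvs1.6}, followed by the standard sup/inf manipulations (including the empty-supremum convention for (xi) and the sup--inf inequality for (iii)), establishes all eleven parts. The paper itself states this proposition without proof as a preliminary adapted from Sebastian and Ramkrishnan, and your argument is exactly the routine verification it implicitly relies on.
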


\section{Lowen type multi-fuzzy topology}
Unless otherwise mentioned by $\hat{0}$ we denote the n-tuple $(0,0,..,0)$. 
\begin{Definition} \label{mftvs1.5} A multi-fuzzy set $A$ in $\overset{n}{\underset{i=1}{\prod}}I_{i}^{X}$ is said to be non-null constant multi-fuzzy set if $\mu_{A_{i}}=\bar{c_{i}},$
for all $i=1,2,..,n;$ where $\bar{c_{i}}(x)=c_{i},$ $\forall x\in X$
is a constant fuzzy set with $c_{i}\in(0,1].$ This is denoted by $C_X^n$.\end{Definition}
\begin{Note} The null multi-fuzzy set in $\overset{n}{\underset{i=1}{\prod}}I_{i}^{X}$ as defined in Definition \ref{mftvs1.2} $(vi)$ will be denoted by $\Phi_{X}^{n}$.
\end{Note}
\begin{Definition} \label{mftvs2.7} For a multi-fuzzy set $F\in\overset{n}{\underset{i=1}{\prod}}I_{i}^{X},$ \begin{itemize}
\item[(1)] $\mu_{F}(x)\succ\hat{0}\Leftrightarrow\mu_{F_{i}}(x)>0,$ for all
$i=1,2,..,n.$
\item[(2)] $\mu_{F}(x)= \hat{0}\Leftrightarrow\mu_{F_{i}}(x)=0,$ for all
$i=1,2,..,n.$
\end{itemize}\end{Definition}
\begin{Definition} Let $\mathcal{M}_X$ denote the collection of all multi-fuzzy sets in $\overset{n}{\underset{i=1}{\prod}}I_{i}^{X}$ such that $F \in \mathcal{M}_{X}$ $\implies$ for any $x\in X,$ either $\mu_{F}(x)\succ\hat{0}$ or $\mu_{F}(x)= \hat{0}.$ 
\end{Definition}
\begin{Definition} \label{mftvs2.1} Let $\tau$ be a sub-collection
of $\mathcal{M}_{X}.$ Then $\tau$ is said to be
a Lowen type multi-fuzzy topology on $X$ if\begin{itemize}
\item[(i)]  $\Phi_{X}^{n},~C_{X}^{n}\in\tau$, where $\mu_{C_{X,i}^{n}}=\bar{c_{i}}$, for
all $i=1,2,..,n,$ and $\bar{c_{i}}(x)=c_{i},$ $\forall x\in X$
is constant fuzzy set with $c_{i}\in(0,1]$.
\item[(ii)] the intersection of any two multi-fuzzy sets in $\tau$ belongs
to $\tau$. 
\item[(iii)] the union of any number of multi-fuzzy sets in $\tau$ belongs
to $\tau$. \end{itemize}
\noindent The triplet $(X,\overset{n}{\underset{i=1}{\prod}}I_{i}^{X},\tau)$
is called Lowen type multi-fuzzy topological space over $X$. \\
The members of $\tau$  are called multi-fuzzy open sets and a multi-fuzzy set is said to be multi-fuzzy closed if its complement is in $\tau$.  \end{Definition}

\begin{Definition} \label{mftvs2.2} Let $(X,\overset{n}{\underset{i=1}{\prod}}I_{i}^{X},\tau)$
be a Lowen type multi-fuzzy topological space. A sub-collection $\mathcal{B}$
of $\tau$ is said to be an open base of $\tau$ if every member of
$\tau$ can be expressed as the union of some members of $\mathcal{B}$.\end{Definition}

\begin{Definition}\label{mftvs2.3} Let $(X,\overset{n}{\underset{i=1}{\prod}}I_{i}^{X},\tau)$
and $(Y,\overset{n}{\underset{i=1}{\prod}}I_{i}^{Y},\nu)$ be two Lowen type
multi-fuzzy topological spaces. A mapping $f:(X,\overset{n}{\underset{i=1}{\prod}}I_{i}^{X},\tau)\rightarrow(Y,\overset{n}{\underset{i=1}{\prod}}I_{i}^{Y},\nu)$
is said to be \begin{itemize}
\item[(i)] multi-fuzzy continuous if $f^{-1}(A)\in\tau,\forall A\in\nu$. 
\item[(ii)] multi-fuzzy homeomorphism if $f$ is bijective and
$f,f^{-1}$ are multi-fuzzy continuous. 
\item[(iii)] multi-fuzzy open if $A\in\tau$ $\Rightarrow$
$f(A)\in\nu$; 
\item[(iv)] multi-fuzzy closed if $A$ is multi-fuzzy closed
in $(X,\overset{n}{\underset{i=1}{\prod}}I_{i}^{X},\tau)$ $\Rightarrow$ $f(A)$
is multi-fuzzy closed in $(Y,\overset{n}{\underset{i=1}{\prod}}I_{i}^{Y},\nu)$.\end{itemize}
\end{Definition}

\begin{Proposition} \label{mftvs2.4}Let $(X,\overset{n}{\underset{i=1}{\prod}}I_{i}^{X},\tau)$,
$(Y,\overset{n}{\underset{i=1}{\prod}}I_{i}^{Y},\nu)$ and $(Z,\overset{n}{\underset{i=1}{\prod}}I_{i}^{Z},\omega)$
be Lowen type multi-fuzzy topological spaces. If $f:(X,\overset{n}{\underset{i=1}{\prod}}I_{i}^{X},\tau)\rightarrow(Y,\overset{n}{\underset{i=1}{\prod}}I_{i}^{Y},\nu)$
and $g:(Y,\overset{n}{\underset{i=1}{\prod}}I_{i}^{Y},\nu)\rightarrow(Z,\overset{n}{\underset{i=1}{\prod}}I_{i}^{Z},\omega)$
are multi-fuzzy continuous (open) and $f(X)\subseteq Y$, then the
composition $g\circ f:(X,\overset{n}{\underset{i=1}{\prod}}I_{i}^{X},\tau)\rightarrow(Z,\overset{n}{\underset{i=1}{\prod}}I_{i}^{Z},\omega)$
is multi-fuzzy continuous (open).\end{Proposition}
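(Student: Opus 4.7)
The plan is to reduce the statement to the two standard set-theoretic identities
$(g\circ f)^{-1}(A) = f^{-1}(g^{-1}(A))$ and $(g\circ f)(A) = g(f(A))$
in the multi-fuzzy setting, and then chain the hypotheses on $f$ and $g$. Because continuity and openness are defined pointwise on each coordinate of the multi-membership function, and Definition \ref{mftvs1.6} defines image and inverse image coordinatewise in terms of the underlying maps on membership values, both identities should follow by a direct unwinding of definitions rather than by any topological argument.

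For the continuity part, I would start with an arbitrary $A\in\omega$ and aim to show $(g\circ f)^{-1}(A)\in\tau$. First I would verify the identity $(g\circ f)^{-1}(A) = f^{-1}(g^{-1}(A))$ by evaluating membership values: for any $x\in X$,
\[
\mu_{(g\circ f)^{-1}(A)}(x) \;=\; \mu_{A}\bigl((g\circ f)(x)\bigr) \;=\; \mu_{A}\bigl(g(f(x))\bigr) \;=\; \mu_{g^{-1}(A)}(f(x)) \;=\; \mu_{f^{-1}(g^{-1}(A))}(x),
\]
using Definition \ref{mftvs1.6}(ii) twice. Then, since $g$ is multi-fuzzy continuous, $g^{-1}(A)\in\nu$, and since $f$ is multi-fuzzy continuous, $f^{-1}(g^{-1}(A))\in\tau$, giving the conclusion.

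For the openness part, the argument is dual. Take $A\in\tau$ and prove $(g\circ f)(A) = g(f(A))$ by a similar membership-value computation using Definition \ref{mftvs1.6}(i): for $z\in Z$,
\[
\mu_{(g\circ f)(A)}(z) \;=\; \bigvee_{z = g(f(x))}\mu_{A}(x) \;=\; \bigvee_{z=g(y)}\;\bigvee_{y=f(x)}\mu_{A}(x) \;=\; \bigvee_{z=g(y)}\mu_{f(A)}(y) \;=\; \mu_{g(f(A))}(z).
\]
Applying first that $f$ is multi-fuzzy open, we get $f(A)\in\nu$; then applying that $g$ is multi-fuzzy open yields $g(f(A))\in\omega$, hence $(g\circ f)(A)\in\omega$.

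There is no real obstacle here; the entire content of the proposition sits in the two identities for image and preimage of compositions, which are immediate from the coordinatewise definitions. The hypothesis $f(X)\subseteq Y$ is automatic from $f\colon X\to Y$ and is invoked only to guarantee that the composition $g\circ f$ is well defined so that the identities make sense. The only mild subtlety worth a sentence in the write-up is that the supremum-over-fibers defining the image genuinely factors through $y=f(x)$ and $z=g(y)$; this is where one should be careful that empty suprema in the second identity yield $\hat 0$ on both sides, so the identity still holds on points of $Z$ not in $g(f(X))$.
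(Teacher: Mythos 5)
Your proposal is correct and follows essentially the same route as the paper: take an open set in the target, pull it back (or push it forward) in two stages using the continuity (openness) of $g$ and then of $f$, and identify the result with the preimage (image) under $g\circ f$. The only difference is that you explicitly verify the identities $(g\circ f)^{-1}(A)=f^{-1}(g^{-1}(A))$ and $(g\circ f)(A)=g(f(A))$ at the level of membership values, which the paper's proof uses implicitly.
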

\begin{proof} Let $H\in\omega.$ Since $g:(Y,\overset{n}{\underset{i=1}{\prod}}I_{i}^{Y},\nu)\rightarrow(Z,\overset{n}{\underset{i=1}{\prod}}I_{i}^{Z},\omega)$
is a multi-fuzzy continuous mapping, it follows that $g^{-1}(H)\in\nu.$
Again since $f:(X,\overset{n}{\underset{i=1}{\prod}}I_{i}^{X},\tau)\rightarrow(Y,\overset{n}{\underset{i=1}{\prod}}I_{i}^{Y},\nu)$
is a multi-fuzzy continuous mapping, it follows that $f^{-1}(g^{-1}(H))\in\tau\Rightarrow(f\circ g)^{-1}(H)\in\tau.$
So, the composition $g\circ f:(X,\overset{n}{\underset{i=1}{\prod}}I_{i}^{X},\tau)\rightarrow(Z,\overset{n}{\underset{i=1}{\prod}}I_{i}^{Z},\omega)$
is multi-fuzzy continuous.\\
The proof in the case of multi-fuzzy open mapping is similar.\end{proof}

\begin{Proposition} \label{mftvs2.5} Let $(X,\overset{n}{\underset{i=1}{\prod}}I_{i}^{X},\tau)$
and $(Y,\overset{n}{\underset{i=1}{\prod}}I_{i}^{Y},\nu)$ be two Lowen type
multi-fuzzy topological spaces. For a bijective mapping $f:(X,\overset{n}{\underset{i=1}{\prod}}I_{i}^{X},\tau)\rightarrow(Y,\overset{n}{\underset{i=1}{\prod}}I_{i}^{Y},\nu)$,
the following statements are equivalent: \begin{itemize}
\item[(i)] $f:(X,\overset{n}{\underset{i=1}{\prod}}I_{i}^{X},\tau)\rightarrow(Y,\overset{n}{\underset{i=1}{\prod}}I_{i}^{Y},\nu)$
is multi-fuzzy homeomorphism; 
\item[(ii)] $f:(X,\overset{n}{\underset{i=1}{\prod}}I_{i}^{X},\tau)\rightarrow(Y,\overset{n}{\underset{i=1}{\prod}}I_{i}^{Y},\nu)$
and $f^{-1}:(Y,\overset{n}{\underset{i=1}{\prod}}I_{i}^{Y},\nu)\rightarrow(X,\overset{n}{\underset{i=1}{\prod}}I_{i}^{X},\tau)$
are multi-fuzzy continuous; 
\item[(iii)] $f:(X,\overset{n}{\underset{i=1}{\prod}}I_{i}^{X},\tau)\rightarrow(Y,\overset{n}{\underset{i=1}{\prod}}I_{i}^{Y},\nu)$
is both multi-fuzzy continuous and multi-fuzzy open. 
\end{itemize}
\end{Proposition}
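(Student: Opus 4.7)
The plan is to prove the chain $(i) \Leftrightarrow (ii) \Leftrightarrow (iii)$. The equivalence $(i) \Leftrightarrow (ii)$ is essentially a tautology: $f$ is assumed bijective, and Definition \ref{mftvs2.3}(ii) defines a multi-fuzzy homeomorphism as precisely a bijection for which both $f$ and $f^{-1}$ are multi-fuzzy continuous. So I would dispatch this equivalence in one sentence.

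The substantive content is $(ii) \Leftrightarrow (iii)$. The key identity I will establish first, and use repeatedly, is that whenever $f:X\rightarrow Y$ is bijective and $A\in\overset{n}{\underset{i=1}{\prod}}I_{i}^{X}$, then $(f^{-1})^{-1}(A)=f(A)$. Writing $g=f^{-1}:Y\rightarrow X$, Definition \ref{mftvs1.6}(ii) gives $\mu_{g^{-1}(A)}(y)=\mu_{A}(g(y))=\mu_{A}(f^{-1}(y))$, while Definition \ref{mftvs1.6}(i) gives $\mu_{f(A)}(y)=\underset{y=f(x)}{\vee}\mu_{A}(x)$; since $f$ is bijective the index set on the right is the singleton $\{f^{-1}(y)\}$, so the two expressions agree. (Alternatively, one can derive this from Proposition \ref{mftvs1.7}(x),(xi), which become equalities under bijectivity.)

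With this identity in hand, the two implications become short. For $(ii)\Rightarrow(iii)$: assume $f$ and $f^{-1}$ are multi-fuzzy continuous. The continuity of $f$ is already half of $(iii)$. For the open part, let $A\in\tau$; then multi-fuzzy continuity of $f^{-1}:(Y,\nu)\rightarrow(X,\tau)$ applied to the open set $A$ yields $(f^{-1})^{-1}(A)\in\nu$, and by the identity above this equals $f(A)$, so $f(A)\in\nu$. For $(iii)\Rightarrow(ii)$: assume $f$ is multi-fuzzy continuous and multi-fuzzy open. To show $f^{-1}:(Y,\nu)\rightarrow(X,\tau)$ is continuous, pick $A\in\tau$; multi-fuzzy openness gives $f(A)\in\nu$, and the identity rewrites this as $(f^{-1})^{-1}(A)\in\nu$.

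I don't foresee a genuine obstacle; the only thing to be careful about is not invoking $(f^{-1})^{-1}(A)=f(A)$ as if it were a set-theoretic triviality, since here $A$ is a multi-fuzzy set and one must verify that the definitions of image and inverse image in Definition \ref{mftvs1.6} actually produce the same multi-membership function. Once that verification is recorded, the rest is a straightforward rearrangement of definitions.
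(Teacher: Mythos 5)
Your proof is correct and complete. The paper actually states this proposition without supplying any proof, so there is nothing to compare against; your argument --- dispatching $(i)\Leftrightarrow(ii)$ as a restatement of Definition \ref{mftvs2.3}(ii) and reducing $(ii)\Leftrightarrow(iii)$ to the verified identity $(f^{-1})^{-1}(A)=f(A)$ for bijective $f$ --- is the standard route, and your care in checking that identity at the level of multi-membership functions (rather than treating it as a set-theoretic triviality) is exactly the point most worth recording.
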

\begin{Definition}\label{mftvs2.6} If $\tau_{j},j\in J$ are Lowen type multi
fuzzy topologies in $\overset{n}{\underset{i=1}{\prod}}I_{i}^{X},$ then $\underset{j\in J}{\cap}\tau_{j}$
is a multi-fuzzy topology in $\overset{n}{\underset{i=1}{\prod}}I_{i}^{X}.$\end{Definition}
\begin{Definition} \label{mftvs2.8} A multi-fuzzy set $F$ in a Lowen type
multi-fuzzy topological space $(X,\overset{n}{\underset{i=1}{\prod}}I_{i}^{X},\tau)$
is said to be a multi-fuzzy neighbourhood of a point $x\in X$ if there is a $G\in\tau$
such that $G\sqsubseteq F$ and $\mu_{F}(x)= \mu_{G}(x)\succ \hat{0}$.\\
By $\mathcal{F}_{x}$ we denote the family of all multi-fuzzy neighbourhoods of
$x$ which are determined by the multi-fuzzy topology $\tau$ on $X.$
\end{Definition}
\begin{Proposition}\label{mftvs2.9} If $F_{x}$ and $G_{x}$ are multi-fuzzy neighbourhoods
of $x,$ then $F_{x}\sqcap G_{x}$ is also a multi-fuzzy neighbourhood of $x.$ \end{Proposition}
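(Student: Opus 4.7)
The plan is to use the intersection axiom of the Lowen type multi-fuzzy topology directly on the open witnesses of the two neighbourhoods. By Definition \ref{mftvs2.8} there exist $H^{1},H^{2}\in\tau$ with $H^{1}\sqsubseteq F_{x}$, $H^{2}\sqsubseteq G_{x}$, and $\mu_{F_{x}}(x)=\mu_{H^{1}}(x)\succ\hat{0}$, $\mu_{G_{x}}(x)=\mu_{H^{2}}(x)\succ\hat{0}$. The natural candidate for the open multi-fuzzy set witnessing that $F_{x}\sqcap G_{x}$ is a neighbourhood of $x$ is $H^{1}\sqcap H^{2}$, which lies in $\tau$ by axiom (ii) of Definition \ref{mftvs2.1}.

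Next I would verify the two conditions of Definition \ref{mftvs2.8} for the pair $(F_{x}\sqcap G_{x},\,H^{1}\sqcap H^{2})$. Containment follows from monotonicity of $\sqcap$: from $H^{1}\sqsubseteq F_{x}$ and $H^{2}\sqsubseteq G_{x}$ one has $H^{1}\sqcap H^{2}\sqsubseteq F_{x}\sqcap G_{x}$ componentwise via Definition \ref{mftvs1.2}(iii). For the value at $x$, evaluating componentwise and using $\mu_{F_{x,i}}(x)=\mu_{H^{1}_{i}}(x)$ and $\mu_{G_{x,i}}(x)=\mu_{H^{2}_{i}}(x)$ for every $i\in\{1,\dots,n\}$, I get
\[
\mu_{F_{x}\sqcap G_{x},i}(x)=\mu_{F_{x,i}}(x)\wedge\mu_{G_{x,i}}(x)=\mu_{H^{1}_{i}}(x)\wedge\mu_{H^{2}_{i}}(x)=\mu_{H^{1}\sqcap H^{2},i}(x),
\]
so $\mu_{F_{x}\sqcap G_{x}}(x)=\mu_{H^{1}\sqcap H^{2}}(x)$. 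Since both $\mu_{H^{1}_{i}}(x)$ and $\mu_{H^{2}_{i}}(x)$ are strictly positive, their minimum is strictly positive for each $i$, giving $\mu_{H^{1}\sqcap H^{2}}(x)\succ\hat{0}$ by Definition \ref{mftvs2.7}(1).

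There is essentially no obstacle here; the only point requiring any care is the bookkeeping that componentwise $\wedge$ preserves both the equality at $x$ (because both equalities hold coordinate by coordinate) and the strict positivity (because $(0,1]$ is closed under finite infima). Once these two simple facts are noted, the conclusion that $F_{x}\sqcap G_{x}\in\mathcal{F}_{x}$ is immediate.
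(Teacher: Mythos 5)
Your argument is correct and complete: taking the open witnesses $H^{1},H^{2}$ from Definition \ref{mftvs2.8}, forming $H^{1}\sqcap H^{2}\in\tau$ via axiom (ii) of Definition \ref{mftvs2.1}, and checking containment, equality of values at $x$, and strict positivity componentwise is exactly the natural proof. The paper states Proposition \ref{mftvs2.9} without supplying a proof, so there is nothing to compare against, but your write-up correctly fills that gap and handles the only delicate points (that the componentwise minimum preserves both the equality at $x$ and the condition $\succ\hat{0}$).
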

\begin{Proposition}\label{mftvs2.12} A multi-fuzzy set $A$ in $\overset{n}{\underset{i=1}{\prod}}I_{i}^{X}$ is open in the Lowen type multi-fuzzy topological space $(X,\overset{n}{\underset{i=1}{\prod}}I_{i}^{X},\tau)$
iff for every $x\in X$ satisfying $\mu_{A}(x)\succ\hat{0},$ there
is $F_{x}\sqsubseteq A,F_{x}\in\tau$ and $\mu_{F_{x}}(x)=\mu_{A}(x).$
\end{Proposition}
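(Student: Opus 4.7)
The plan is a standard open-cover style argument, as in the fuzzy case, adapted to the constraint that every relevant multi-fuzzy set lies in $\mathcal{M}_{X}$.

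The forward implication is immediate: if $A\in\tau$ and $x\in X$ satisfies $\mu_{A}(x)\succ\hat{0}$, then $F_{x}:=A$ itself witnesses the condition, since $A\sqsubseteq A$, $A\in\tau$, and $\mu_{A}(x)=\mu_{A}(x)$.

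For the converse, assume that for every $x\in X$ with $\mu_{A}(x)\succ\hat{0}$ there exists $F_{x}\in\tau$ with $F_{x}\sqsubseteq A$ and $\mu_{F_{x}}(x)=\mu_{A}(x)$. Let $S=\{x\in X:\mu_{A}(x)\succ\hat{0}\}$ and define
\[
B\;=\;\underset{x\in S}{\sqcup}F_{x}.
\]
Since each $F_{x}\in\tau$, axiom (iii) of Definition \ref{mftvs2.1} gives $B\in\tau$. It remains to check $B=A$, which I would split according to the dichotomy guaranteed by $A\in\mathcal{M}_{X}$: for any $y\in X$, either $\mu_{A}(y)=\hat{0}$ or $\mu_{A}(y)\succ\hat{0}$. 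In the first case, $F_{x}\sqsubseteq A$ for every $x\in S$ forces $\mu_{F_{x}}(y)\le\mu_{A}(y)=\hat{0}$ componentwise, whence $\mu_{B}(y)=\hat{0}=\mu_{A}(y)$. In the second case $y\in S$, so $\mu_{B}(y)\succeq\mu_{F_{y}}(y)=\mu_{A}(y)$; combined with $\mu_{B}(y)\le\mu_{A}(y)$ (since each $F_{x}\sqsubseteq A$), this yields $\mu_{B}(y)=\mu_{A}(y)$. Hence $A=B\in\tau$.

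The only point requiring any real care is the $\mu_{A}(y)=\hat{0}$ case: one must remember that $A\in\mathcal{M}_{X}$ rules out the intermediate situation where some $\mu_{A_{i}}(y)=0$ but others are positive, so the argument using the chosen $F_{x}$ at points of $S$ really does pin down $B$ on all of $X$. Apart from this conceptual check, the proof is routine, relying only on $\sqsubseteq$-monotonicity of unions and the topology axioms already available.
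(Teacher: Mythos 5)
Your proof is correct, and it is the standard covering argument: the forward direction is witnessed by $A$ itself, and the converse writes $A$ as the union $B=\underset{x\in S}{\sqcup}F_{x}$ of the witnessing open sets and verifies $B=A$ pointwise. The paper states Proposition \ref{mftvs2.12} without proof, so there is nothing to compare against directly, but your argument is the one the authors evidently have in mind. Your closing remark is in fact more than a ``conceptual check'': as literally stated the proposition quantifies over all $A\in\overset{n}{\underset{i=1}{\prod}}I_{i}^{X}$, and the ``if'' direction is then false --- take $X=\{p\}$, $n=2$, $\mu_{A_{1}}(p)=\tfrac{1}{2}$, $\mu_{A_{2}}(p)=0$; the hypothesis holds vacuously (no point has $\mu_{A}(x)\succ\hat{0}$) yet $A\notin\mathcal{M}_{X}\supseteq\tau$. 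So the restriction to $A\in\mathcal{M}_{X}$, which you invoke to get the dichotomy at each $y$, is genuinely needed and should be read into the statement. One further small point worth making explicit: the union $B$ of members of $\tau$ lies in $\mathcal{M}_{X}$ because $\mathcal{M}_{X}$ is closed under arbitrary unions (at each $y$ the supremum is either $\hat{0}$ or has all components positive), so axiom (iii) of Definition \ref{mftvs2.1} does apply; and when $S=\emptyset$ the empty union gives $B=\Phi_{X}^{n}=A$, which your case analysis covers implicitly.
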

\begin{Proposition}\label{mftvs2.14} Let $(X,\overset{n}{\underset{i=1}{\prod}}I_{i}^{X},\tau)$
be a Lowen type multi-fuzzy topological space. Then for each $x\in X,$
the family $\mathcal{F}_{x}$ of all multi-fuzzy neighbourhoods of $x$ satisfies:\\
$(i)$ every non-null constant multi-fuzzy set belongs to $\mathcal{F}_{x}.$\\
$(ii)$ $N(x)\succ\hat{0},$ for each $N\in\mathcal{F}_{x}.$ \\
$(iii)$ If $N,M\in\mathcal{F}_{x},$ then $N\sqcap M\in\mathcal{F}_{x}.$
\\
$(iv)$ Let $F\in\overset{n}{\underset{i=1}{\prod}}I_{i}^{X}$ and $x\in X$
with $\mu_{F}(x)\succ\hat{0}$.
If for each $i\in \{1,2,...,n\}$ and for each $0<r_{i}<\mu_{F_{i}}(x)$
there exists $F^{r_{i}}\in\mathcal{F}_{x}$ with $F^{r_{i}}\sqsubseteq F$
and $\mu_{F_{i}^{r_{i}}}(x)>r_{i},$ then $F\in\mathcal{F}_{x}.$\\
$(v)$ If $N\in\mathcal{F}_{x},$ then there exists $G\in\mathcal{F}_{x}$
such that $G\sqsubseteq N$ and $\mu_{N}(x)=\mu_{W}(x)$ and if $\mu_{G}(y)\succ\hat{0},$
then $G\in\mathcal{F}_{y}.$\end{Proposition}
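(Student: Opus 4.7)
The plan is to clear parts (i), (ii), (iii), and (v) immediately from the definitions and focus the real work on (iv).

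First, for (i) I would observe that axiom (i) of Definition \ref{mftvs2.1} places every non-null constant multi-fuzzy set $C$ into $\tau$, so $C$ is its own witness ($C \sqsubseteq C$, $\mu_C(x) \succ \hat{0}$) for $C \in \mathcal{F}_x$. Part (ii) is just unpacking Definition \ref{mftvs2.8}: an $N \in \mathcal{F}_x$ comes with a $G \in \tau$ satisfying $\mu_N(x) = \mu_G(x) \succ \hat{0}$. Part (iii) is exactly Proposition \ref{mftvs2.9} and can be cited verbatim. For (v), reading the ``$\mu_W(x)$'' in the statement as a typo for $\mu_G(x)$, I would take the open witness $G \in \tau$ provided by the neighbourhood definition for $N$: then $G \sqsubseteq N$, $\mu_G(x) = \mu_N(x)$, and for any $y$ with $\mu_G(y) \succ \hat{0}$ the set $G$ itself (via $G \sqsubseteq G$) certifies $G \in \mathcal{F}_y$.

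The substance is in (iv). For every pair $(i, r_i)$ with $i \in \{1, \dots, n\}$ and $r_i \in (0, \mu_{F_i}(x))$, the hypothesis supplies a neighbourhood $F^{(i, r_i)} \in \mathcal{F}_x$ contained in $F$ with $\mu_{F^{(i, r_i)}_i}(x) > r_i$. Applying Definition \ref{mftvs2.8} once more, I would extract an open $G^{(i, r_i)} \in \tau$ with $G^{(i, r_i)} \sqsubseteq F^{(i, r_i)}$ and $\mu_{G^{(i, r_i)}}(x) = \mu_{F^{(i, r_i)}}(x)$, so in particular $G^{(i, r_i)} \sqsubseteq F$. Then I would form
\[
G \;=\; \bigsqcup_{i=1}^{n}\,\bigsqcup_{r_i \in (0,\, \mu_{F_i}(x))} G^{(i, r_i)},
\]
which lies in $\tau$ by axiom (iii) of Definition \ref{mftvs2.1} and in $F$ by monotonicity of $\sqcup$.

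The only genuine verification is $\mu_G(x) = \mu_F(x)$: for each component $j$, restricting the double union to the diagonal slice $i = j$ gives
\[
\mu_{G_j}(x) \;\ge\; \sup_{r_j \in (0,\, \mu_{F_j}(x))} \mu_{G^{(j, r_j)}_j}(x) \;\ge\; \sup_{r_j} r_j \;=\; \mu_{F_j}(x),
\]
and $G \sqsubseteq F$ supplies the reverse inequality, whence $\mu_G(x) = \mu_F(x) \succ \hat{0}$ and $G$ witnesses $F \in \mathcal{F}_x$. The main obstacle, such as it is, is structural bookkeeping in (iv): one must index the chosen neighbourhoods by a \emph{pair} $(i, r_i)$ (not by $i$ alone) so that the componentwise suprema of their values at $x$ exactly recover $\mu_F(x)$. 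Closure of $\tau$ under arbitrary unions then automatically keeps $G$ inside $\mathcal{M}_X$, so no separate check is needed.
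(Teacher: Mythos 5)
Your proposal is correct and takes essentially the same route as the paper: for part (iv) both arguments union the supplied neighbourhoods over all pairs $(i,r_i)$ and verify that the componentwise suprema at $x$ recover $\mu_{F}(x)$, the reverse inequality coming from containment in $F$. The only cosmetic difference is that you pass directly to the open witnesses $G^{(i,r_i)}$ and apply Definition \ref{mftvs2.8} once at the end, whereas the paper stays at the level of $\mathcal{F}_{x}$ and routes the conclusion through two auxiliary closure properties of the neighbourhood family; your treatment of parts (i)--(iii) and (v), which the paper omits, is also fine.
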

\begin{proof} We only give the proof of $(iv).$ We see that if $N\in\mathcal{F}_{x},$
$N\sqsubseteq W$ and $\mu_{N}(x)=\mu_{W}(x),$ then $W\in\mathcal{F}_{x}$
$.......(*)$ and if $N_{j}\in\mathcal{F}_{x},$ $j\in\triangle,$ then $\underset{j\in\triangle}{\sqcup}N_{i}\in\mathcal{F}_{x}$$.........(**)$.
Let $F\in\overset{n}{\underset{i=1}{\prod}}I_{i}^{X}$ and $x\in X$ with $\mu_{F}(x)\succ\hat{0}$. Also let for each $i\in \{1,2,..,n\}$ and for each $0<r_{i}<\mu_{F_{i}}(x)$ there
exists $F^{r_{i}}\in\mathcal{F}_{x}$ with $F^{r_{i}}\sqsubseteq F$
and $\mu_{F_{i}^{r_{i}}}(x)>r_{i}$. Choose $j\in \{1,2,..,n\}.$ Then
for each $0<r_{j}<\mu_{F_{j}}(x)$ there exists $F^{r_{j}}\in\mathcal{F}_{x}$
with $F^{r_{j}}\sqsubseteq F$ and $\mu_{F_{j}^{r_{j}}}(x)>r_{j}$.
Let $\mu_{F_{i}^{j}}(x)=\underset{0<r_{j}<\mu_{F_{j}}(x)}{\vee}\mu_{F_{i}^{r_{j}}}(x),$
for all $i=1,2,..,n.$ Then $F^{j}\sqsubseteq F$ and $\mu_{F_{j}^{j}}(x)=\mu_{F_{j}}(x).$
Thus for each $i\in \{1,2,...,n\},$ there exists $F^{i}$ such that $F^{i}\sqsubseteq F$
and $\mu_{F_{i}^{i}}(x)=\mu_{F_{i}}(x).$ Let $F_{0}=\sqcup\{F^{i}:i=1,2,...,n\}$. Then $\mu_{F_{0}}(x)=\mu_{F}(x)$ and $F_{0}\sqsubseteq F.$ Hence
by $(*)$ and $(**),$ $F\in\mathcal{F}_{x}.$\end{proof}
\begin{Definition}\label{mftvs2.15} Let $X$ be a set and $\mathcal{L}$
be a function from $X$ into the power set of $\overset{n}{\underset{i=1}{\prod}}I_{i}^{X}$.
Then $\mathcal{L}$ is called a multi-fuzzy neighbourhood system on
$X$ if $\mathcal{L}$ satisfies:\\
$(N1)$ for each $x\in X,$ every non-null constant multi-fuzzy set
belongs to $\mathcal{L}(x).$\\
$(N2)$ If $N\in\mathcal{L}(x),$ then $\mu_{N}(x)\succ\hat{0}.$\\
$(N3)$ If $N,M\in\mathcal{L}(x),$ then $N\sqcap M\in\mathcal{L}(x).$
\\
$(N4)$ Let $F\in\overset{n}{\underset{i=1}{\prod}}I_{i}^{X}$ and $x\in X$
with $\mu_{F}(x)\succ\hat{0}$. If for each $i\in\{1,2,..,n\}$ and for each $0<r_{i}<\mu_{F_{i}}(x)$ there exists $F^{r_{i}}\in\mathcal{L}(x)$ with $F^{r_{i}}\sqsubseteq F$
and $\mu_{F_{i}^{r_{i}}}(x)>r_{i},$ then $F\in\mathcal{L}(x).$\\
$(N5)$ If $N\in\mathcal{L}(x),$ then there exists $G\in\mathcal{L}(x)$
such that $G\sqsubseteq N$ and $\mu_{N}(x)=\mu_{G}(x)$ and if $\mu_{G}(y)\succ\hat{0},$
then $G\in\mathcal{L}(y).$
\end{Definition}

\begin{Proposition}\label{mftvs2.16} If $\mathcal{L}$ is a multi-fuzzy neighbourhood system on $X,$ 
we define $\tau_{\mathcal{L}}$ as the family of all multi-fuzzy sets
$G$ in $X$ with the property that if $\mu_{G}(x)\succ\hat{0},$
then $G\in\mathcal{L}(x)$ and $\varPhi\in\tau_{\mathcal{L}}.$ Then $(X,\overset{n}{\underset{i=1}{\prod}}I_{i}^{X},\tau_{\mathcal{L}})$
is a Lowen type multi-fuzzy topological space. Also, for every $x\in X,$ the family $\mathcal{F}_{x}$
of all multi-fuzzy neighbourhoods of $x$ with respect to the multi-fuzzy topology
$\tau_{\mathcal{L}}$ is exactly $\mathcal{L}(x).$\end{Proposition}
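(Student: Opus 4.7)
The plan is to verify the three axioms for a Lowen type multi-fuzzy topology in Definition \ref{mftvs2.1} one at a time, and then establish the two inclusions $\mathcal{F}_x \subseteq \mathcal{L}(x)$ and $\mathcal{L}(x) \subseteq \mathcal{F}_x$ separately. Throughout I will use the fact that $\mathcal{M}_X$ is closed under $\sqcap$ and $\sqcup$ (since the ``all positive or all zero'' condition at each point is preserved by coordinatewise min and max), so every multi-fuzzy set I produce below lies in $\mathcal{M}_X$ and the set-theoretic membership in $\tau_{\mathcal{L}}$ is the only substantive issue.

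For axiom (i), $\Phi_X^n \in \tau_{\mathcal{L}}$ is built into the definition, and $C_X^n \in \tau_{\mathcal{L}}$ follows because at every point $x$ we have $\mu_{C_X^n}(x) \succ \hat{0}$ and axiom (N1) gives $C_X^n \in \mathcal{L}(x)$. For axiom (ii), if $G_1,G_2 \in \tau_{\mathcal{L}}$ and $\mu_{G_1 \sqcap G_2}(x) \succ \hat{0}$, then each coordinate of $\mu_{G_j}(x)$ must be strictly positive; because $G_j \in \mathcal{M}_X$, this forces $\mu_{G_j}(x) \succ \hat{0}$, hence $G_j \in \mathcal{L}(x)$, and then (N3) yields $G_1 \sqcap G_2 \in \mathcal{L}(x)$.

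The main step is axiom (iii). Given $\{G_j : j \in J\} \subseteq \tau_{\mathcal{L}}$, set $G = \underset{j\in J}{\sqcup} G_j$ and fix $x$ with $\mu_G(x) \succ \hat{0}$; I will feed $G$ into (N4). For each coordinate $i$ and each $0 < r_i < \mu_{G_i}(x) = \underset{j}{\vee}\, \mu_{G_{j,i}}(x)$, I can pick $j = j(i, r_i)$ so that $\mu_{G_{j,i}}(x) > r_i > 0$. Since $G_j \in \mathcal{M}_X$, a single positive coordinate forces $\mu_{G_j}(x) \succ \hat{0}$, hence $G_j \in \mathcal{L}(x)$ by the defining property of $\tau_{\mathcal{L}}$. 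Setting $F^{r_i} := G_j$, we have $F^{r_i} \in \mathcal{L}(x)$, $F^{r_i} \sqsubseteq G$, and $\mu_{F^{r_i}_i}(x) > r_i$; axiom (N4) then delivers $G \in \mathcal{L}(x)$. I expect this to be the most delicate step because it is the only place where the ``nested approximation'' condition (N4) is really used, and it depends critically on the dichotomy property of $\mathcal{M}_X$ to upgrade a single-coordinate positivity to full positivity.

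For the identification $\mathcal{F}_x = \mathcal{L}(x)$, the forward inclusion $\mathcal{F}_x \subseteq \mathcal{L}(x)$ is a short application of (N4): if $F \in \mathcal{F}_x$ has a $\tau_{\mathcal{L}}$-witness $G \sqsubseteq F$ with $\mu_G(x) = \mu_F(x) \succ \hat{0}$, then $G \in \mathcal{L}(x)$ and we may take $F^{r_i} := G$ for every $i$ and every admissible $r_i$. The reverse inclusion $\mathcal{L}(x) \subseteq \mathcal{F}_x$ is exactly the role of (N5): given $F \in \mathcal{L}(x)$, (N5) produces $G \in \mathcal{L}(x)$ with $G \sqsubseteq F$, $\mu_G(x) = \mu_F(x)$, and $G \in \mathcal{L}(y)$ whenever $\mu_G(y) \succ \hat{0}$ — but this last clause is precisely the membership criterion for $\tau_{\mathcal{L}}$, so $G$ is a $\tau_{\mathcal{L}}$-open witness that makes $F$ a multi-fuzzy neighbourhood of $x$. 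Noting that $\mu_F(x) \succ \hat{0}$ by (N2) closes the argument.
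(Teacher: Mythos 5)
Your proposal is correct and follows essentially the same route as the paper: axiom (i) via (N1), finite intersections via (N3) and the dichotomy of $\mathcal{M}_X$, arbitrary unions via (N4), and the identification $\mathcal{F}_x=\mathcal{L}(x)$ via (N4) in one direction and (N2)+(N5) in the other. The only difference is organizational --- the paper first extracts from (N4) the two auxiliary facts that $\mathcal{L}(x)$ is closed under enlargement with equal value at $x$ and under unions, and then applies them, whereas you apply (N4) directly at each point of use; the content is the same.
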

\begin{proof} Clearly every constant multi-fuzzy set belongs to $\tau_{\mathcal{L}}.$
Now we see that from $(N4),$ we have $N\in\mathcal{L}(x)$, $N\sqsubseteq W$
and $\mu_{N}(x)=\mu_{W}(x),$ then $W\in\mathcal{L}(x).$$......(*)$\\
and if $N_{j}\in\mathcal{L}(x),$ $j\in\triangle,$ then $\underset{j\in\triangle}{\sqcup}N_{i}\in\mathcal{L}(x).$$.......(**)$
\\
For each $j\in\triangle,$ let $G_{j}\in\tau_{\mathcal{L}}.$ Set
$H=\underset{j\in\triangle}{\sqcup}G_{j}$. If $\mu_{H}(x)\succ\hat{0},$
then there exists nonempty $J_{x}\subset\triangle$ such that $\mu_{G_{j}}(x)\succ\hat{0}$
for all $j\in J_{x}$ and $\underset{j\in\triangle}{\vee}\mu_{G_{j}}(x)=\underset{j\in J_{x}}{\vee}\mu_{G_{j}}(x).$
By definition of $\tau_{\mathcal{L}},$ if $j\in J_{x},$ $G_{j}\in\mathcal{L}(x).$
From $(**)$ we conclude that $\underset{j\in J_{x}}{\sqcup}G_{j}\in\mathcal{L}(x).$
By $(*)$, $H\in\mathcal{L}(x).$ Therefore $H\in\tau_{\mathcal{L}}.$
\\
Let $G,H\in\tau_{\mathcal{L}}$ and suppose that $\mu_{G\sqcap H}(x)\succ \hat{0}.$
Then $\mu_{G}(x)\succ\hat{0}$ and $\mu_{H}(x)\succ\hat{0}$. Hence,
by the definition of $\tau_{\mathcal{L}}$, $G$ and $H$ are in $\mathcal{L}(x).$
It follows from $(N3)$ that $G\sqcap H\in\mathcal{L}(x).$ Thus $G\sqcap H\in\tau_{\mathcal{L}}.$\\
If $N\in\mathcal{F}_{x},$ then there exists $G\in\tau_{\mathcal{L}}$
such that $G\sqsubseteq N$ and $\mu_{N}(x)=\mu_{G}(x)\succ\hat{0}.$
By definition of $\tau_{\mathcal{L}}$, $G\in\mathcal{L}(x).$ \\
Conversely let $N\in\mathcal{L}(x).$ Then by $(N2),$ $\mu_{N}(x)\succ\hat{0}$
and by $(N5)$ there exists $G\in\mathcal{L}(x)$ such that $G\sqsubseteq N$
and $\mu_{G}(x)=\mu_{N}(x)$ and if $\mu_{G}(y)\succ\hat{0},$ then
$G\in\mathcal{L}(y).$ It follows from definition of $\tau_{\mathcal{L}}$,
$G\in\tau_{\mathcal{L}}.$ Consequently $N\in\mathcal{F}_{x}.$ \end{proof}

\begin{Proposition}\label{mftvs2.18} Let $(X,\overset{n}{\underset{i=1}{\prod}}I_{i}^{X},\tau)$
be a Lowen type multi-fuzzy topological space and $\mathcal{L}_{\tau}$
be a function from $X$ to power set of $\overset{n}{\underset{i=1}{\prod}}I_{i}^{X},$
defined by $\mathcal{L}_{\tau}(x)=\mathcal{F}_{x},$ where $x\in X$
and $\mathcal{F}_{x}$ is the family of all multi-fuzzy neighbourhoods of $x$
with respect to $\tau.$ Then $\mathcal{L}_{\tau}$ is a multi-fuzzy
neighbourhood system on $X$ and $\tau_{\mathcal{L}_{\tau}}=\tau.$ 
\end{Proposition}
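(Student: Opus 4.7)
The plan splits naturally into two parts, matching the two assertions of the proposition.

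First I would verify that $\mathcal{L}_\tau$ satisfies the five axioms (N1)–(N5). This is essentially immediate: Proposition~\ref{mftvs2.14} proves that for any Lowen type multi-fuzzy topology $\tau$ the family $\mathcal{F}_x$ satisfies properties (i)–(v), and these are \emph{textually identical} to (N1)–(N5) in Definition~\ref{mftvs2.15}. So the only thing to check is that the recipe $x \mapsto \mathcal{F}_x$ indeed defines a function from $X$ into the power set of $\prod_{i=1}^{n} I_i^X$, which is clear from the definition of a multi-fuzzy neighbourhood. Hence $\mathcal{L}_\tau$ is a multi-fuzzy neighbourhood system on $X$.

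Next I would prove the equality $\tau_{\mathcal{L}_\tau}=\tau$ by two inclusions, relying on Proposition~\ref{mftvs2.12} as the main tool. For $\tau \subseteq \tau_{\mathcal{L}_\tau}$, pick $G \in \tau$ and any $x$ with $\mu_G(x) \succ \hat{0}$; then $G$ itself witnesses that $G$ is a multi-fuzzy neighbourhood of $x$ (take $G \sqsubseteq G$ in Definition~\ref{mftvs2.8}), so $G \in \mathcal{F}_x = \mathcal{L}_\tau(x)$, and by the defining property of $\tau_{\mathcal{L}_\tau}$ we obtain $G \in \tau_{\mathcal{L}_\tau}$. The null set $\Phi_X^n$ is in $\tau_{\mathcal{L}_\tau}$ by definition.

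For the reverse inclusion $\tau_{\mathcal{L}_\tau} \subseteq \tau$, take $G \in \tau_{\mathcal{L}_\tau}$. For each $x$ with $\mu_G(x) \succ \hat{0}$, we have $G \in \mathcal{L}_\tau(x) = \mathcal{F}_x$, so by Definition~\ref{mftvs2.8} there exists $F_x \in \tau$ with $F_x \sqsubseteq G$ and $\mu_{F_x}(x) = \mu_G(x)$. This is precisely the hypothesis of Proposition~\ref{mftvs2.12}, which yields $G \in \tau$. I expect no real obstacle here; the only point requiring some care is ensuring the membership of $G$ in $\mathcal{M}_X$ throughout (so that the dichotomy $\mu_G(x) \succ \hat{0}$ or $\mu_G(x) = \hat{0}$ always applies), but this is already built into the definition of $\tau_{\mathcal{L}_\tau}$ and Proposition~\ref{mftvs2.12}. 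Thus both inclusions hold and $\tau_{\mathcal{L}_\tau} = \tau$.
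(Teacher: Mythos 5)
Your proposal is correct. The first half coincides exactly with the paper's: both of you obtain (N1)--(N5) for $\mathcal{L}_\tau$ by citing Proposition~\ref{mftvs2.14} and observing that its conclusions are verbatim the axioms of Definition~\ref{mftvs2.15}. For the equality $\tau_{\mathcal{L}_\tau}=\tau$ your route differs in a small but genuine way. The paper argues indirectly: it invokes the second assertion of Proposition~\ref{mftvs2.16} (that the neighbourhood filters of $\tau_{\mathcal{L}_\tau}$ at each $x$ are exactly $\mathcal{L}_\tau(x)=\mathcal{F}_x$) and then appeals to the principle that a multi-fuzzy set is open iff it is a neighbourhood of every point where its membership value is $\succ\hat{0}$, so that two topologies with the same neighbourhood systems must coincide. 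You instead prove the two inclusions directly from the definition of $\tau_{\mathcal{L}_\tau}$: membership of an open $G$ in each $\mathcal{F}_x$ is witnessed by $G$ itself, and the reverse inclusion is exactly the hypothesis of Proposition~\ref{mftvs2.12}. Your version is more self-contained (it uses only the first assertion of Proposition~\ref{mftvs2.16}, namely that $\tau_{\mathcal{L}_\tau}$ is a topology, and in fact for the inclusion $\tau_{\mathcal{L}_\tau}\subseteq\tau$ not even that), while the paper's version is shorter on the page because it reuses the neighbourhood-coincidence statement it has already proved. The caveat you flag about membership in $\mathcal{M}_X$ is real but is an imprecision already present in the paper's definition of $\tau_{\mathcal{L}}$, not a defect you introduce; handling it as you suggest (reading $\tau_{\mathcal{L}}$ as implicitly restricted to $\mathcal{M}_X$) is the right fix.
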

\begin{proof} By Proposition \ref{mftvs2.14}, $\mathcal{L}_{\tau}$ satisfies the
conditions $(N1)$ to $(N5)$ and therefore is a multi-fuzzy neighbourhood
system on $X.$ By Proposition \ref{mftvs2.16}, $\tau_{\mathcal{L}_{\tau}}$
is a multi-fuzzy topology on $X.$ Also, from Proposition \ref{mftvs2.16}, we can
say that for each $x\in X,$ the multi-fuzzy neighbourhoods of $x$ with respect
to $\tau_{\mathcal{L}_{\tau}}$ are exactly same as the members of
$\mathcal{F}_{x}.$ Since a multi-fuzzy set $U$ is open iff it is a multi-fuzzy neighbourhood
of each point $x$ satisfying $\mu_{U}(x)\succ \hat{0}$, it follows that
$\tau_{\mathcal{L}_{\tau}}=\tau.$\end{proof}
\begin{Proposition}\label{mftvs2.19} Let $(X,\overset{n}{\underset{i=1}{\prod}}I_{i}^{X},\tau)$
and $(Y,\overset{n}{\underset{i=1}{\prod}}I_{i}^{Y},\nu)$ be two multi-fuzzy
topological spaces and $f:X\rightarrow Y$ be any map. Then the following
conditions are equivalent:\\
$(a)$ The function $f$ is multi-fuzzy continuous. \\
$(b)$ The inverse image of every multi-fuzzy closed set is multi
fuzzy closed.\\
$(c)$ For every $x\in X$ and every multi-fuzzy nbd $N$ of $f(x),$
$f^{-1}(N)$ is a multi-fuzzy nbd of $x.$ \\
$(d)$ For every $x\in X$ and every multi-fuzzy nbd $N$ of $f(x)$,
there is a multi-fuzzy neighbourhood $M$ of $x$ such that $f(M)\sqsubseteq N$
and $\mu_{M}(x)=\mu_{f^{-1}(N)}(x).$ \end{Proposition}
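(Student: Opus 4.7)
The plan is to close the equivalence by proving $(a)\Leftrightarrow(b)$ directly from the identity $f^{-1}(G^C)=[f^{-1}(G)]^C$, and then to run the cycle $(a)\Rightarrow(c)\Rightarrow(d)\Rightarrow(a)$. The equivalence $(a)\Leftrightarrow(b)$ is a straightforward consequence of Proposition \ref{mftvs1.7}$(ix)$, together with the definition of multi-fuzzy closedness (complement of an open set), so I would dispose of this first.

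For $(a)\Rightarrow(c)$, I would start with an arbitrary $N\in\mathcal{F}_{f(x)}$ and unfold the nbd definition to obtain $G\in\nu$ with $G\sqsubseteq N$ and $\mu_{G}(f(x))=\mu_{N}(f(x))\succ\hat{0}$. Multi-fuzzy continuity then yields $f^{-1}(G)\in\tau$, while monotonicity of $f^{-1}$ (Proposition \ref{mftvs1.7}$(vi)$) gives $f^{-1}(G)\sqsubseteq f^{-1}(N)$. Evaluating at $x$ via the definition of the inverse image, $\mu_{f^{-1}(G)}(x)=\mu_{G}(f(x))=\mu_{N}(f(x))=\mu_{f^{-1}(N)}(x)$, which exhibits $f^{-1}(N)$ as a multi-fuzzy nbd of $x$. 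For $(c)\Rightarrow(d)$, the natural choice is $M:=f^{-1}(N)$; condition $(c)$ says $M\in\mathcal{F}_{x}$, Proposition \ref{mftvs1.7}$(xi)$ gives $f(M)=f(f^{-1}(N))\sqsubseteq N$, and the equality $\mu_{M}(x)=\mu_{f^{-1}(N)}(x)$ is tautological.

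The main work is $(d)\Rightarrow(a)$, and my strategy is to invoke the open-characterisation Proposition \ref{mftvs2.12}. Fix $A\in\nu$ and any $x\in X$ with $\mu_{f^{-1}(A)}(x)\succ\hat{0}$, i.e.\ $\mu_{A}(f(x))\succ\hat{0}$; then $A$ itself is a multi-fuzzy nbd of $f(x)$. Hypothesis $(d)$ supplies $M\in\mathcal{F}_{x}$ with $f(M)\sqsubseteq A$ and $\mu_{M}(x)=\mu_{f^{-1}(A)}(x)$. Since $M$ is a multi-fuzzy nbd of $x$, there exists $F_{x}\in\tau$ with $F_{x}\sqsubseteq M$ and $\mu_{F_{x}}(x)=\mu_{M}(x)$. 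The subtle step is verifying $F_{x}\sqsubseteq f^{-1}(A)$: from $f(M)\sqsubseteq A$ and Proposition \ref{mftvs1.7}$(vi)$ I get $f^{-1}(f(M))\sqsubseteq f^{-1}(A)$, while Proposition \ref{mftvs1.7}$(x)$ gives $M\sqsubseteq f^{-1}(f(M))$, so $F_{x}\sqsubseteq M\sqsubseteq f^{-1}(A)$. Combined with $\mu_{F_{x}}(x)=\mu_{M}(x)=\mu_{f^{-1}(A)}(x)$, Proposition \ref{mftvs2.12} forces $f^{-1}(A)\in\tau$, completing the cycle.

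The only real obstacle is the chain of inclusions in the last step, where one must juggle $f$ and $f^{-1}$ correctly using the non-trivial parts $(x)$ and $(xi)$ of Proposition \ref{mftvs1.7} (which are only equalities under injectivity/surjectivity). The rest is a routine translation between the topology definition, the nbd definition, and the local characterisation of openness.
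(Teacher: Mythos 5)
The paper states Proposition \ref{mftvs2.19} without any proof, so there is nothing to compare your argument against; judged on its own, your proposal is correct and complete. The equivalence $(a)\Leftrightarrow(b)$ via Proposition \ref{mftvs1.7}$(ix)$, the cycle $(a)\Rightarrow(c)\Rightarrow(d)\Rightarrow(a)$, the choice $M:=f^{-1}(N)$ in $(c)\Rightarrow(d)$, and the chain $F_{x}\sqsubseteq M\sqsubseteq f^{-1}(f(M))\sqsubseteq f^{-1}(A)$ in $(d)\Rightarrow(a)$ all check out against the definitions in the paper. Two small points are worth making explicit. First, before invoking Proposition \ref{mftvs2.12} to conclude $f^{-1}(A)\in\tau$, you should observe that $f^{-1}(A)$ actually lies in $\mathcal{M}_{X}$: this is immediate because $A\in\nu\subseteq\mathcal{M}_{Y}$ and $\mu_{f^{-1}(A)}(x)=\mu_{A}(f(x))$, but since $\tau$ is by definition a subcollection of $\mathcal{M}_{X}$ the openness criterion cannot apply without it. Second, in $(d)\Rightarrow(a)$ you should note that the case $\mu_{f^{-1}(A)}(x)=\hat{0}$ for all $x$ (i.e.\ $f^{-1}(A)=\Phi_{X}^{n}\in\tau$) is covered, so that the quantification in Proposition \ref{mftvs2.12} is not vacuously misapplied. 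Neither point is a gap in substance; your proof stands.
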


\section{Product multi-fuzzy topology}
Unless otherwise mentioned, in the rest part of this paper, multi-fuzzy topological space means Lowen type multi-fuzzy topological space.
\begin{Definition}\label{mftvs3.1} Let $F,G$ be two fuzzy subsets
of $X$ and $Y$ respectively. Then their product, denoted by $F\times G,$
is defined by $\mu_{\left(F\times G\right)}(x,y)=min\{\mu_{F}(x),$
$\mu_{G}(y)\},\forall(x,y)\in X\times Y.$ \end{Definition}

\begin{Definition}\label{mftvs3.2} Let $F\in\overset{n}{\underset{i=1}{\prod}}I_{i}^{X}$
and $G\in\overset{n}{\underset{i=1}{\prod}}I_{i}^{Y}$ respectively. Then their
product is defined by $\mu_{(F\times G)_{i}}=\mu_{F_{i}\times G_{i}},$
for $i=1,2,...,n$, i.e. $F\times G=\prec\left(\mu_{F_{i}}(x)\wedge\mu_{G_{i}}(y)\right)_{i=1}^{n}\succ.$
It is clear that $F\times G$ is a multi-fuzzy set over $X\times Y$,
i.e. $F\times G\in\overset{n}{\underset{i=1}{\prod}}I_{i}^{X\times Y}.$ \end{Definition}
\begin{Definition}\label{mftvs3.3} Let $F\in I^{X}$ and $G\in\overset{n}{\underset{i=1}{\prod}}I_{i}^{Y}$.
Then their product is defined by $\mu_{(F\times G)_{i}}=\mu_{F\times G_{i}},$
for $i=1,2,...,n$, i.e. $F\times G=\prec\left(\mu_{F}(x)\wedge\mu_{G_{i}}(y)\right)_{i=1}^{n}\succ.$
Also, $F\times G$ is a multi-fuzzy set over $X\times Y$, i.e. $F\times G\in\overset{n}{\underset{i=1}{\prod}}I_{i}^{X\times Y}.$
\end{Definition}

\begin{Proposition}\label{mftvs3.4} Let $(X,\overset{n}{\underset{i=1}{\prod}}I_{i}^{X},\tau)$
and $(Y,\overset{n}{\underset{i=1}{\prod}}I_{i}^{Y},\nu)$ be two multi-fuzzy topological spaces. Then $\mathcal{F}=\{F\times G:F\in\tau,G\in\nu\}$
forms an open base for a multi-fuzzy topology on $X\times Y$. \end{Proposition}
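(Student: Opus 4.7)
The plan is to define the candidate topology $\tau\times\nu$ on $X\times Y$ to be the collection of all multi-fuzzy sets obtainable as a union of members of $\mathcal{F}$, with the convention that the empty union is $\Phi_{X\times Y}^{n}$, and then to verify the three axioms of Definition~\ref{mftvs2.1} together with the containment $\tau\times\nu\subseteq\mathcal{M}_{X\times Y}$.

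The central ingredient is the algebraic identity
\[
(F_{1}\times G_{1})\sqcap(F_{2}\times G_{2})=(F_{1}\sqcap F_{2})\times(G_{1}\sqcap G_{2}),
\]
which I would prove by a coordinatewise calculation reducing to $\min\{\min\{a,b\},\min\{c,d\}\}=\min\{\min\{a,c\},\min\{b,d\}\}$. Combined with distributivity of $\wedge$ over $\vee$ in $[0,1]$, this yields, for arbitrary $A=\underset{k\in K}{\sqcup}(F^{k}\times G^{k})$ and $B=\underset{l\in L}{\sqcup}(F'^{l}\times G'^{l})$ in $\tau\times\nu$, the identity
\[
A\sqcap B=\underset{(k,l)\in K\times L}{\sqcup}\bigl((F^{k}\sqcap F'^{l})\times(G^{k}\sqcap G'^{l})\bigr).
\]
Since $\tau$ and $\nu$ are closed under finite intersection, each factor in the union lies in $\mathcal{F}$, giving axiom (ii). Axiom (iii), closure under arbitrary unions, is immediate from the construction.

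For axiom (i), the empty union (or $\Phi_{X}^{n}\times\Phi_{Y}^{n}$) produces $\Phi_{X\times Y}^{n}$. A non-null constant multi-fuzzy set on $X\times Y$ is obtained by taking $C_{X}^{n}\in\tau$ with entries $c_{i}\in(0,1]$ and $C_{Y}^{n}\in\nu$ with entries $d_{i}\in(0,1]$: the $i$-th coordinate of $C_{X}^{n}\times C_{Y}^{n}$ is constant $\min\{c_{i},d_{i\,}\}>0$, so this product is a non-null constant multi-fuzzy set lying in $\mathcal{F}$.

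Finally, I would verify that $\tau\times\nu\subseteq\mathcal{M}_{X\times Y}$. For a single product $F\times G$ with $F\in\mathcal{M}_{X}$ and $G\in\mathcal{M}_{Y}$, the value $\mu_{(F\times G)_{i}}(x,y)=\min\{\mu_{F_{i}}(x),\mu_{G_{i}}(y)\}$ is positive in some coordinate iff it is positive in every coordinate, since both $\mu_{F}(x)$ and $\mu_{G}(y)$ individually have the $\mathcal{M}$-dichotomy. For a union $H=\sqcup_{k}(F^{k}\times G^{k})$, if $\mu_{H_{i}}(x,y)>0$ for some $i$ then some summand $F^{k_{0}}\times G^{k_{0}}$ already has positive $i$-th coordinate at $(x,y)$, hence all of its coordinates are positive there, forcing every coordinate of $\mu_{H}(x,y)$ to be positive. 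The main obstacle is isolating and proving the product-intersection identity; everything else is bookkeeping. Since by construction every element of $\tau\times\nu$ is a union of members of $\mathcal{F}$, the family $\mathcal{F}$ is an open base for $\tau\times\nu$ in the sense of Definition~\ref{mftvs2.2}.
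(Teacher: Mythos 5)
Your proof is correct and complete. The paper states Proposition \ref{mftvs3.4} without giving any proof, so there is nothing to compare against; your argument --- the product--intersection identity $(F_{1}\times G_{1})\sqcap(F_{2}\times G_{2})=(F_{1}\sqcap F_{2})\times(G_{1}\sqcap G_{2})$ combined with complete distributivity of $\wedge$ over $\vee$ in $[0,1]$, plus the checks that every non-null constant on $X\times Y$ arises as $C_{X}^{n}\times C_{Y}^{n}$ and that products and unions of products inherit the $\mathcal{M}$-dichotomy --- is the standard route and, importantly, addresses the Lowen-specific requirements (constants in $\tau\times\nu$ and $\tau\times\nu\subseteq\mathcal{M}_{X\times Y}$) that a Chang-type version of the statement would not need.
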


\begin{Definition} \label{mftvs3.5}Let $(X,\overset{n}{\underset{i=1}{\prod}}I_{i}^{X},\tau)$
and $(Y,\overset{n}{\underset{i=1}{\prod}}I_{i}^{Y},\nu)$ be two multi-fuzzy topological spaces. The multi-fuzzy topology in $X\times Y$
induced by the open base $\mathcal{F}=\{F\times G:A\in\tau,G\in\nu\}$
is said to be the product multi-fuzzy topology of the multi-fuzzy
topologies $\tau$ and $\nu$. It is denoted by $\tau\times\nu$.
The multi-fuzzy topological space $[X\times Y,\overset{n}{\underset{i=1}{\prod}}I_{i}^{X\times Y},\tau\times\nu]$
is said to be the multi-fuzzy topological product of the multi-fuzzy
topological spaces $(X,\overset{n}{\underset{i=1}{\prod}}I_{i}^{X},\tau)$ and
$(Y,\overset{n}{\underset{i=1}{\prod}}I_{i}^{Y},\nu)$. \end{Definition} 

\begin{Proposition}\label{mftvs3.6} Let $(X,\overset{n}{\underset{i=1}{\prod}}I_{i}^{X},\tau)$
be the product space of two multi-fuzzy topological spaces $(X_{1},\overset{n}{\underset{i=1}{\prod}}I_{i}^{X_{1}},\tau_{1})$
and $(X_{2},\overset{n}{\underset{i=1}{\prod}}I_{i}^{X_{2}},\tau_{2})$ respectively.
Then the projection mappings $\pi_{j}:(X,\overset{n}{\underset{i=1}{\prod}}I_{i}^{X},\tau)\rightarrow(X_{j},\overset{n}{\underset{i=1}{\prod}}I_{i}^{X_{j}},\tau_{j}),j=1,2$
are multi-fuzzy continuous and multi-fuzzy open. Also $\tau_{1}\times\tau_{2}$
is the smallest multi-fuzzy topology in $X_{1}\times X_{2}$ for which
the projection mappings are multi-fuzzy continuous. \\
If further, $(Y,\overset{n}{\underset{i=1}{\prod}}I_{i}^{X},\nu)$ be any multi
fuzzy topological space then the mapping $f:(Y,\overset{n}{\underset{i=1}{\prod}}I_{i}^{Y},\nu)\rightarrow(X,\overset{n}{\underset{i=1}{\prod}}I_{i}^{X},\tau)$
is multi-fuzzy continuous iff the mappings $\pi_{j}\circ f:(Y,\overset{n}{\underset{i=1}{\prod}}I_{i}^{Y},\nu)\rightarrow(X_{j},\overset{n}{\underset{i=1}{\prod}}I_{i}^{X},\tau_{j}),j=1,2$
are multi-fuzzy continuous. \end{Proposition}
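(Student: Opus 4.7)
The plan is to handle the three claims in their stated order: (a) continuity and openness of the projections, (b) minimality of $\tau_{1}\times\tau_{2}$ among topologies for which the projections are continuous, and (c) the universal property for maps into the product. Throughout I would use the description of $\tau_{1}\times\tau_{2}$ as the collection of unions of base elements $A\times B$ (Definition \ref{mftvs3.5}) together with the image/preimage calculus of Proposition \ref{mftvs1.7}.

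For continuity of $\pi_{j}$, a direct unpacking of Definitions \ref{mftvs1.6} and \ref{mftvs3.2} yields $\pi_{1}^{-1}(A)=A\times\bar{X_{2}}$ for any $A\in\tau_{1}$, where $\bar{X_{2}}$ is the absolute multi-fuzzy set (the constant $C_{X_{2}}^{n}$ with every $c_{i}=1$), and $\bar{X_{2}}\in\tau_{2}$ by axiom (i) of Definition \ref{mftvs2.1}. Hence $\pi_{1}^{-1}(A)$ is a base element of $\tau_{1}\times\tau_{2}$. For openness, Proposition \ref{mftvs1.7}(iv) reduces the problem to base elements, and a componentwise calculation gives
\[
\mu_{\pi_{1}(A\times B),i}(x_{1})=\min\!\bigl\{\mu_{A_{i}}(x_{1}),\ \sup_{x_{2}\in X_{2}}\mu_{B_{i}}(x_{2})\bigr\},
\]
so $\pi_{1}(A\times B)=A\sqcap\bar{s}$, where $\bar{s}$ is the constant multi-fuzzy set with $i$-th component $\sup_{x_{2}}\mu_{B_{i}}(x_{2})$. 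Either $B=\Phi_{X_{2}}^{n}$, in which case the image collapses to $\Phi_{X_{1}}^{n}\in\tau_{1}$, or the $\mathcal{M}_{X_{2}}$-dichotomy forces every such supremum to be strictly positive, making $\bar{s}$ a legitimate $C_{X_{1}}^{n}$ and $A\sqcap\bar{s}\in\tau_{1}$.

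For minimality, let $\tau'$ be any multi-fuzzy topology on $X_{1}\times X_{2}$ for which $\pi_{1},\pi_{2}$ are multi-fuzzy continuous. Then for every $A\in\tau_{1}$ and $B\in\tau_{2}$, the sets $A\times\bar{X_{2}}=\pi_{1}^{-1}(A)$ and $\bar{X_{1}}\times B=\pi_{2}^{-1}(B)$ lie in $\tau'$, and a pointwise $\min$ check gives $(A\times\bar{X_{2}})\sqcap(\bar{X_{1}}\times B)=A\times B$. Therefore every base element of $\tau_{1}\times\tau_{2}$ belongs to $\tau'$, and $\tau_{1}\times\tau_{2}\subseteq\tau'$ follows from the union axiom.

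The universal property then reduces cleanly to the preceding parts. The forward implication is immediate from Proposition \ref{mftvs2.4}. For the converse, assume each $\pi_{j}\circ f$ is multi-fuzzy continuous, fix $W=\sqcup_{k}(A_{k}\times B_{k})\in\tau$, and use Proposition \ref{mftvs1.7}(vii) together with the identity
\[
f^{-1}(A_{k}\times B_{k})=(\pi_{1}\circ f)^{-1}(A_{k})\sqcap(\pi_{2}\circ f)^{-1}(B_{k}),
\]
which is a one-line componentwise $\min$ verification. Each factor lies in $\nu$ by hypothesis, so the intersection and then the union $f^{-1}(W)=\sqcup_{k}f^{-1}(A_{k}\times B_{k})$ lie in $\nu$. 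The main obstacle I expect is the openness of the projections: one must carefully track the dichotomy built into $\mathcal{M}_{X}$ so that the constant $\bar{s}$ is either strictly positive in every coordinate (a valid $C_{X_{1}}^{n}\in\tau_{1}$) or uniformly zero, ruling out a partially-zero constant which the axioms of Definition \ref{mftvs2.1} do not supply as an open set.
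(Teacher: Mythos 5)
The paper states Proposition \ref{mftvs3.6} without any proof, so there is no argument of the authors' to compare against; judged on its own, your proof is correct and complete, and it is the standard argument one would expect here. In particular you correctly identify and resolve the only genuinely delicate point, namely that in $\pi_{1}(A\times B)=A\sqcap\bar{s}$ the constant $\bar{s}$ is guaranteed by the $\mathcal{M}_{X_{2}}$-dichotomy to be either strictly positive in every coordinate (hence a non-null constant open set) or identically $\hat{0}$, and the identities $\pi_{1}^{-1}(A)=A\times\bar{X_{2}}$, $(A\times\bar{X_{2}})\sqcap(\bar{X_{1}}\times B)=A\times B$ and $f^{-1}(A\times B)=(\pi_{1}\circ f)^{-1}(A)\sqcap(\pi_{2}\circ f)^{-1}(B)$ all check out componentwise.
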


\begin{Proposition} \label{mftvs3.7}Let $(X,\overset{n}{\underset{i=1}{\prod}}I_{i}^{X},\tau)$
be a multi-fuzzy topological space. Then the mapping $f:(X,\overset{n}{\underset{i=1}{\prod}}I_{i}^{X},\tau)\rightarrow(X,\overset{n}{\underset{i=1}{\prod}}I_{i}^{X},\tau)$
defined by $f(x)=x,$ $\forall x\in X$ is multi-fuzzy continuous.\end{Proposition}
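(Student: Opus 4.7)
The plan is to verify the definition of multi-fuzzy continuity (Definition 2.3(i)) directly: I need to show that for every $A \in \tau$, the inverse image $f^{-1}(A)$ lies in $\tau$. Since the open sets are the same in the domain and codomain (both carry $\tau$), it suffices to show $f^{-1}(A) = A$ as multi-fuzzy sets.

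First, I would fix an arbitrary $A \in \tau$ and compute the membership of $f^{-1}(A)$ componentwise. By Definition \ref{mftvs1.6}(ii), for each $x \in X$ we have $\mu_{f^{-1}(A)}(x) = \mu_A(f(x))$, and each coordinate therefore satisfies $\mu_{f^{-1}(A)_i}(x) = \mu_{A_i}(f(x))$ for $i = 1, 2, \ldots, n$. Since $f$ is the identity, $f(x) = x$, so $\mu_{f^{-1}(A)_i}(x) = \mu_{A_i}(x)$ for every $i$. Invoking Definition \ref{mftvs1.2}(ii), this componentwise equality gives $f^{-1}(A) = A$.

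Consequently $f^{-1}(A) = A \in \tau$, which by Definition \ref{mftvs2.3}(i) shows that $f$ is multi-fuzzy continuous. There is no real obstacle here: the argument reduces to the tautology that the identity preserves every multi-fuzzy set under pre-image, and the result then follows from the fact that $A$ is assumed to belong to $\tau$ to begin with.
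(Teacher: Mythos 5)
Your proof is correct and is precisely the routine verification the paper omits (its proof reads only ``The proof is straightforward''): for the identity map, $\mu_{f^{-1}(A)}(x)=\mu_{A}(f(x))=\mu_{A}(x)$ gives $f^{-1}(A)=A\in\tau$. Nothing further is needed.
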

\begin{proof} The proof is straightforward.
\end{proof}

\begin{Proposition}\label{mftvs3.8} Let $(X,\overset{n}{\underset{i=1}{\prod}}I_{i}^{X},\tau)$
and $(Y,\overset{n}{\underset{i=1}{\prod}}I_{i}^{Y},\nu)$ be two multi-fuzzy
topological spaces. Then the mapping $f:(X,\overset{n}{\underset{i=1}{\prod}}I_{i}^{X},\tau)\rightarrow(Y,\overset{n}{\underset{i=1}{\prod}}I_{i}^{Y},\nu)$
defined by $f(x)=y_{0,}$ $\forall x\in X,$ where $y_{0}$ is a fixed
element of $Y$ is multi-fuzzy continuous.\end{Proposition}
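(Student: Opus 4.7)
The plan is to unwind the definition of $f^{-1}$ in Definition \ref{mftvs1.6} applied to the constant map and observe that its output is a constant multi-fuzzy set, so openness follows directly from axiom (i) of the Lowen type topology in Definition \ref{mftvs2.1}.

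First, I would fix an arbitrary $A\in\nu$ and compute $f^{-1}(A)$. By definition, for every $x\in X$,
\[
\mu_{f^{-1}(A)}(x)=\mu_{A}(f(x))=\mu_{A}(y_{0}),
\]
so $f^{-1}(A)$ is a constant multi-fuzzy set on $X$ whose value is the fixed vector $\mu_{A}(y_{0})\in I^{n}$.

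Next, I would split into two cases according to the defining dichotomy of $\mathcal{M}_Y$ applied to $A$ at the point $y_{0}$. Since $A\in\nu\subseteq\mathcal{M}_{Y}$, either $\mu_{A}(y_{0})=\hat{0}$, in which case $f^{-1}(A)=\Phi_{X}^{n}$ and hence $f^{-1}(A)\in\tau$ by Definition \ref{mftvs2.1}(i); or $\mu_{A}(y_{0})\succ\hat{0}$, meaning $\mu_{A_{i}}(y_{0})\in(0,1]$ for each $i=1,2,\ldots,n$, so setting $c_{i}=\mu_{A_{i}}(y_{0})$ exhibits $f^{-1}(A)$ as a non-null constant multi-fuzzy set $C_{X}^{n}$ of the form described in Definition \ref{mftvs1.5}, and again Definition \ref{mftvs2.1}(i) places it in $\tau$. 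In either case $f^{-1}(A)$ lies in $\mathcal{M}_{X}$ (it is constant, hence trivially satisfies the dichotomy pointwise), so the verification is legitimate.

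There is essentially no obstacle here: the entire argument rests on the Lowen-type axiom that all non-null constant multi-fuzzy sets, together with $\Phi_{X}^{n}$, are open. The only small point that needs care is to make sure the dichotomy $\mu_{A}(y_{0})=\hat{0}$ versus $\mu_{A}(y_{0})\succ\hat{0}$ is exhaustive, which is precisely guaranteed by the standing assumption $A\in\mathcal{M}_{Y}$ (so no intermediate situation where some coordinates are zero and others positive can occur). Conclude that $f^{-1}(A)\in\tau$ for every $A\in\nu$, hence $f$ is multi-fuzzy continuous in the sense of Definition \ref{mftvs2.3}(i).
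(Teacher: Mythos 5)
Your proposal is correct and follows essentially the same route as the paper's own proof: compute $f^{-1}(A)$ pointwise to see it is a constant multi-fuzzy set with value $\mu_{A}(y_{0})$, then invoke axiom (i) of Definition \ref{mftvs2.1} in the two cases $\mu_{A}(y_{0})=\hat{0}$ and $\mu_{A}(y_{0})\succ\hat{0}$. Your explicit appeal to the dichotomy built into $\mathcal{M}_{Y}$ to justify that these two cases are exhaustive is a small point the paper glosses over, but the argument is the same.
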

\begin{proof} Let $F\in\nu.$ Then for each $i=1,2,..,n,$ $\mu_{\left(f^{-1}(F)\right)_{i}}(x)=\mu_{f^{-1}(F_{i})}(x)=\mu_{F_{i}}(f(x))=\mu_{F_{i}}(y_{0})=c_{i}$(say),
$\forall x\in X.$ So, $\left(f^{-1}(F)\right)_{i}=\overline{c_{i}}$. If $c_{i}\neq 0,$
let $C_{X}^{n}$ be the multi-fuzzy set such that $\mu_{({C_{X}^{n}})_{i}}=\overline{c_{i}},$
for $i=1,2,..,n.$ Then $f^{-1}(F)=C_{X}^{n}\in\tau.$ Also, if $c_{i}=0,$ then $f^{-1}(F)=\Phi_{X}^{n}\in \tau$. Therefore $f$ is multi-fuzzy continuous. \end{proof}

\begin{Proposition}\label{mftvs3.9} Let $(X,\overset{n}{\underset{i=1}{\prod}}I_{i}^{X},\tau)$
be the product space of two multi-fuzzy topological spaces $(X_{1},\overset{n}{\underset{i=1}{\prod}}I_{i}^{X_{1}},\tau_{1})$
and $(X_{2},\overset{n}{\underset{i=1}{\prod}}I_{i}^{X_{2}},\tau_{2})$ respectively,
Let $a\in X_{1}$ (or $X_{2}$). Then the mapping $f:(X_{2},\overset{n}{\underset{i=1}{\prod}}I_{i}^{X_{2}},\tau_{2})$\\ $\rightarrow(X,\overset{n}{\underset{i=1}{\prod}}I_{i}^{X},\tau)$
(or $f:(X_{1},\overset{n}{\underset{i=1}{\prod}}I_{i}^{X_{1}},\tau_{1})\rightarrow(X,\overset{n}{\underset{i=1}{\prod}}I_{i}^{X},\tau)$)
defined by $f(x_{2})=(a,x_{2})$ (or $f(x_{1})=(x_{1},a)$) is multi
fuzzy continuous $\forall x_{2}\in X_{2}$(or $\forall x_{1}\in X_{1})$. \end{Proposition}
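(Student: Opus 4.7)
The plan is to exploit the universal property of the product multi-fuzzy topology already established in Proposition \ref{mftvs3.6}, together with the two special cases of continuity (identity and constant) proved in Propositions \ref{mftvs3.7} and \ref{mftvs3.8}. Concretely, writing $f:X_2\to X=X_1\times X_2$ with $f(x_2)=(a,x_2)$, I want to reduce continuity of $f$ into the product to continuity of the two compositions $\pi_1\circ f$ and $\pi_2\circ f$.

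First I would compute these compositions explicitly. Since $\pi_1(a,x_2)=a$ for every $x_2\in X_2$, the composition $\pi_1\circ f:X_2\to X_1$ is the constant map $x_2\mapsto a$, which is multi-fuzzy continuous by Proposition \ref{mftvs3.8}. Similarly, $\pi_2(a,x_2)=x_2$, so $\pi_2\circ f:X_2\to X_2$ is the identity map on $X_2$, which is multi-fuzzy continuous by Proposition \ref{mftvs3.7}. Applying the ``iff'' part of Proposition \ref{mftvs3.6} (with $Y=X_2$ and $\nu=\tau_2$), the continuity of $\pi_1\circ f$ and $\pi_2\circ f$ is equivalent to the continuity of $f$ itself. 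The second case, $f(x_1)=(x_1,a)$, is entirely symmetric: the two projections give the identity on $X_1$ and the constant map to $a\in X_2$, so the same argument applies.

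There is essentially no technical obstacle here, since all the work has been absorbed into Propositions \ref{mftvs3.6}, \ref{mftvs3.7} and \ref{mftvs3.8}. The only point worth verifying carefully is that the hypotheses of Proposition \ref{mftvs3.6} are met in the present setup, namely that the codomain $X$ carries the product multi-fuzzy topology $\tau=\tau_1\times\tau_2$; this is given by assumption. Thus the proof reduces to two short lines identifying the compositions with the identity and a constant map, followed by an invocation of the universal property.
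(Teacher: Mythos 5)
Your proposal is correct and follows exactly the paper's own argument: the paper likewise identifies $\pi_1\circ f$ with the constant map (Proposition \ref{mftvs3.8}) and $\pi_2\circ f$ with the identity (Proposition \ref{mftvs3.7}), then concludes via the universal property in Proposition \ref{mftvs3.6}. No differences worth noting.
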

\begin{proof} Let $\pi_{j}:(X,\overset{n}{\underset{i=1}{\prod}}I_{i}^{X},\tau)\rightarrow(X_{j},\overset{n}{\underset{i=1}{\prod}}I_{i}^{X_{j}},\tau_{j}),j=1,2$
be the projection mappings. Now $\pi_{1}\circ f:(X_{2},\overset{n}{\underset{i=1}{\prod}}I_{i}^{X_{2}},\tau_{2})\rightarrow(X_{1},\overset{n}{\underset{i=1}{\prod}}I_{i}^{X_{1}},\tau_{1})$
is such that $\pi_{1}(f(x_{2}))=a,$ $\forall x_{2}\in X_{2}$ and
$\pi_{2}\circ f:(X_{2},\overset{n}{\underset{i=1}{\prod}}I_{i}^{X_{2}},\tau_{2})\rightarrow(X_{2},\overset{n}{\underset{i=1}{\prod}}I_{i}^{X_{2}},\tau_{2})$
is such that $\pi_{2}(f(x_{2}))=x_{2},$ $\forall x_{2}\in X_{2}.$
So, by Proposition \ref{mftvs3.7} and  Proposition \ref{mftvs3.8}, mappings $\pi_{2}\circ f$
and $\pi_{1}\circ f$ are multi-fuzzy continuous. Therefore by Proposition \ref{mftvs3.6},
$f$ is multi-fuzzy continuous. \end{proof}
\begin{Proposition} \label{mftvs3.10}Let $(X,\overset{n}{\underset{i=1}{\prod}}I_{i}^{X},\tau)$
be the product space of two multi-fuzzy topological spaces $(X_{1},\overset{n}{\underset{i=1}{\prod}}I_{i}^{X_{1}},\tau_{1})$
and $(X_{2},\overset{n}{\underset{i=1}{\prod}}I_{i}^{X_{2}},\tau_{2})$ and
$(Y,\overset{n}{\underset{i=1}{\prod}}I_{i}^{Y},\nu)$ be the product space
of two multi-fuzzy topological spaces $(Y_{1},\overset{n}{\underset{i=1}{\prod}}I_{i}^{Y_{1}},\nu_{1})$
and $(Y_{2},\overset{n}{\underset{i=1}{\prod}}I_{i}^{Y_{2}},\nu_{2})$. If the
mappings $f_{j}$ of $(X_{j},\overset{n}{\underset{i=1}{\prod}}I_{i}^{X_{j}},\tau_{j})$ into $(Y_{j},\overset{n}{\underset{i=1}{\prod}}I_{i}^{Y_{j}},\nu_{j}),$
$j=1,2$ are multi-fuzzy open, then the product mapping $f=f_{1}\times f_{2}$
from $(X,\overset{n}{\underset{i=1}{\prod}}I_{i}^{X},\tau)$ into $(Y,\overset{n}{\underset{i=1}{\prod}}I_{i}^{Y},\nu)$ defined by $f(x_{1},x_{2})=(f_{1}(x_{1}),f_{2}(x_{2}))$
is multi-fuzzy open.\end{Proposition}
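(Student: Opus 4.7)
The plan is to reduce the problem to basic open sets and then verify by direct computation that the image of a basic product open set under $f = f_1 \times f_2$ is again a basic product open set. Concretely, I would fix an arbitrary $A \in \tau$. By Definition \ref{mftvs3.5}, $A$ can be written as $A = \underset{k \in \triangle}{\sqcup}(F^k \times G^k)$ with $F^k \in \tau_1$ and $G^k \in \tau_2$, so by Proposition \ref{mftvs1.7}(iv) we have $f(A) = \underset{k \in \triangle}{\sqcup} f(F^k \times G^k)$. Since $\nu$ is closed under arbitrary unions by Definition \ref{mftvs2.1}(iii), it is enough to prove that for each $F \in \tau_1$ and $G \in \tau_2$, the multi-fuzzy set $f(F \times G)$ is a member of $\nu$.

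The heart of the argument is to establish the identity $f(F \times G) = f_1(F) \times f_2(G)$. I would unpack the definition of the image for each coordinate $i \in \{1,2,\dots,n\}$ and each $(y_1,y_2) \in Y_1 \times Y_2$:
\begin{equation*}
\mu_{f(F \times G)_i}(y_1,y_2) = \underset{(x_1,x_2):\, f(x_1,x_2)=(y_1,y_2)}{\vee}\bigl(\mu_{F_i}(x_1) \wedge \mu_{G_i}(x_2)\bigr).
\end{equation*}
Because $f(x_1,x_2)=(y_1,y_2)$ splits into independent conditions $f_1(x_1)=y_1$ and $f_2(x_2)=y_2$, the two suprema decouple, and using the fact that $\vee$ distributes over $\wedge$ when the variables range over independent sets, this becomes
\begin{equation*}
\Bigl(\underset{f_1(x_1)=y_1}{\vee}\mu_{F_i}(x_1)\Bigr) \wedge \Bigl(\underset{f_2(x_2)=y_2}{\vee}\mu_{G_i}(x_2)\Bigr) = \mu_{f_1(F)_i}(y_1) \wedge \mu_{f_2(G)_i}(y_2) = \mu_{(f_1(F)\times f_2(G))_i}(y_1,y_2).
\end{equation*}
With this identity in hand, multi-fuzzy openness of $f_1$ and $f_2$ yields $f_1(F) \in \nu_1$ and $f_2(G) \in \nu_2$, so $f_1(F) \times f_2(G)$ is a basic open set of $\nu$ by Definition \ref{mftvs3.5}, completing the reduction.

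The main obstacle I expect is the distributivity step $\vee \min = \min \vee$ over independent variables, together with the bookkeeping for the case when one of the preimages is empty (empty supremum equals $0$, which still satisfies the identity because $0 \wedge t = 0$). A secondary point worth a brief remark is that the images $f_1(F)$ and $f_2(G)$ indeed lie in $\mathcal{M}_{Y_1}$ and $\mathcal{M}_{Y_2}$ respectively: if some coordinate of $f_j(\cdot)$ vanishes at a point $y_j$, then every preimage point $x_j$ with $f_j(x_j)=y_j$ already has that coordinate of the $\mathcal{M}$-set equal to zero, which by the $\mathcal{M}$-condition forces all other coordinates to vanish at those $x_j$ as well, giving the corresponding vanishing for $f_j(\cdot)$ at $y_j$.
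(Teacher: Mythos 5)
Your proposal is correct and follows essentially the same route as the paper: decompose an open set of $\tau$ into a union of basic sets $F\times G$, push the union through $f$, and use the identity $f(F\times G)=f_1(F)\times f_2(G)$. The only difference is that the paper asserts this identity without computation, whereas you verify it coordinatewise (including the decoupling of the suprema and the empty-preimage case), which fills in the detail the paper leaves implicit.
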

\begin{proof} Let $U\in\tau.$ Then there exist multi-fuzzy open sets
$U_{jm}\in\tau_{j},j=1,2,m\in\triangle$ such that $U=\underset{m\in\triangle}{\sqcup}[U_{1m}\times U_{2m}].$
\\
Now $f(U)=\underset{m\in\triangle}{\sqcup}[f_{1}(U_{1m})\times f_{2}(U_{2m})]$.
\\
Since $f_{j},j=1,2$ are multi-fuzzy open, $f(U)$ is multi-fuzzy
open in $\nu$ and hence the product mapping $f=f_{1}\times f_{2}$
of $(X,\overset{n}{\underset{i=1}{\prod}}I_{i}^{X},\tau)$ into $(Y,\overset{n}{\underset{i=1}{\prod}}I_{i}^{Y},\nu),$
defined by $f(x_{1},x_{2})=(f_{1}(x_{1}),f_{2}(x_{2}))$ is multi-fuzzy
open. \end{proof}
\begin{Proposition}\label{mftvs3.11} Let $(X,\overset{n}{\underset{i=1}{\prod}}I_{i}^{X},\tau)$
be the product space of two multi-fuzzy topological spaces $(X_{1},\overset{n}{\underset{i=1}{\prod}}I_{i}^{X_{1}},\tau_{1})$
and $(X_{2},\overset{n}{\underset{i=1}{\prod}}I_{i}^{X_{2}},\tau_{2})$ and
$(Y,\overset{n}{\underset{i=1}{\prod}}I_{i}^{Y},\nu)$ be the product space
of two multi-fuzzy topological spaces $(Y_{1},\overset{n}{\underset{i=1}{\prod}}I_{i}^{Y_{1}},\nu_{1})$
and $(Y_{2},\overset{n}{\underset{i=1}{\prod}}I_{i}^{Y_{2}},\nu_{2})$. If the
mappings $f_{j}$ of $(X_{j},\overset{n}{\underset{i=1}{\prod}}I_{i}^{X_{j}},\tau_{j})$ into $(Y_{j},\overset{n}{\underset{i=1}{\prod}}I_{i}^{Y_{j}},\nu_{j}),$
$j=1,2$ are multi-fuzzy continuous, then the product mapping $f=f_{1}\times f_{2}$
from $(X,\overset{n}{\underset{i=1}{\prod}}I_{i}^{X},\tau)$ into $(Y,\overset{n}{\underset{i=1}{\prod}}I_{i}^{Y},\nu)$ defined by $f(x_{1},x_{2})=(f_{1}(x_{1}),f_{2}(x_{2}))$
is multi-fuzzy continuous.\end{Proposition}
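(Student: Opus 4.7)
My plan is to reduce the statement to the universal property of the product multi-fuzzy topology established in Proposition~\ref{mftvs3.6}. That proposition says a mapping into the product $(Y,\overset{n}{\underset{i=1}{\prod}}I_{i}^{Y},\nu)$ is multi-fuzzy continuous if and only if its composition with each projection $\pi_{j}^{Y}:Y\to Y_{j}$ is multi-fuzzy continuous. So the strategy is to check this condition for $f=f_{1}\times f_{2}$ regarded as a map from $X=X_{1}\times X_{2}$ to $Y=Y_{1}\times Y_{2}$.

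First, I would write out the two compositions explicitly. For any $(x_{1},x_{2})\in X$ one has $(\pi_{1}^{Y}\circ f)(x_{1},x_{2})=f_{1}(x_{1})$ and $(\pi_{2}^{Y}\circ f)(x_{1},x_{2})=f_{2}(x_{2})$. The key observation is that these can be rewritten as $\pi_{1}^{Y}\circ f=f_{1}\circ\pi_{1}^{X}$ and $\pi_{2}^{Y}\circ f=f_{2}\circ\pi_{2}^{X}$, where $\pi_{j}^{X}:X\to X_{j}$ are the projections out of the product space $X$.

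Next, I would assemble the continuity of these compositions from results already in the paper. By Proposition~\ref{mftvs3.6} (applied to the product space $X$), each projection $\pi_{j}^{X}:(X,\overset{n}{\underset{i=1}{\prod}}I_{i}^{X},\tau)\to(X_{j},\overset{n}{\underset{i=1}{\prod}}I_{i}^{X_{j}},\tau_{j})$ is multi-fuzzy continuous. Since $f_{j}$ is multi-fuzzy continuous by hypothesis, Proposition~\ref{mftvs2.4} (continuity is preserved under composition) yields that $f_{j}\circ\pi_{j}^{X}$ is multi-fuzzy continuous for $j=1,2$. Hence $\pi_{j}^{Y}\circ f$ is multi-fuzzy continuous for each $j$.

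Finally, applying the converse direction of Proposition~\ref{mftvs3.6} (this time with codomain the product $Y$ and with $f$ as the candidate map) gives that $f=f_{1}\times f_{2}$ itself is multi-fuzzy continuous, completing the proof. There is no real obstacle here; the only thing to be careful about is to invoke Proposition~\ref{mftvs3.6} in both directions and to state the factorisation $\pi_{j}^{Y}\circ f=f_{j}\circ\pi_{j}^{X}$ unambiguously, so that the hypothesis on $f_{j}$ connects cleanly with the projection-based criterion.
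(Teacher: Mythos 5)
Your proposal is correct and follows essentially the same route as the paper's proof: both establish the factorisation $\pi_{Y_{j}}\circ f=f_{j}\circ\pi_{X_{j}}$, invoke Proposition~\ref{mftvs2.4} for continuity of the compositions, and conclude via the projection criterion of Proposition~\ref{mftvs3.6}. No issues to report.
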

\begin{proof} Since $\left(\pi_{Y_{1}}\circ f\right)(x_{1},x_{2})=\pi_{Y_{1}}(f_{1}(x_{1}),f_{2}(x_{2}))=f_{1}(x_{1})=f_{1}[\pi_{X_{1}}(x_{1},x_{2})]=\left(f_{1}\circ\pi_{X_{1}}\right)(x_{1},x_{2}),$
$\forall(x_{1},x_{2})\in X_{1}\times X_{2},$ $\pi_{Y_{1}}\circ f=f_{1}\circ\pi_{X_{1}}.$
Also, $f_{1}$ and $\pi_{X_{1}}$ are multi-fuzzy continuous and hence
from Proposition \ref{mftvs2.4}, $\pi_{Y_{1}}\circ f$ is multi-fuzzy continuous.\\
Similarly, $\pi_{Y_{2}}\circ f$ is multi-fuzzy continuous. Therefore
from Proposition \ref{mftvs3.6}, $f$ is multi-fuzzy continuous. \end{proof}
\begin{Definition}\label{mftvs3.12} A multi-fuzzy topological space $(X,\overset{n}{\underset{i=1}{\prod}}I_{i}^{X},\tau)$
is said to be second countable if there exists a countable open base
$\mathcal{B}$ for $\tau.$\end{Definition}
\begin{Proposition}\label{mftvs3.13} Let $(X_{1},\overset{n}{\underset{i=1}{\prod}}I_{i}^{X_{1}},\tau_{1})$
and $(X_{2},\overset{n}{\underset{i=1}{\prod}}I_{i}^{X_{2}},\tau_{2})$ be two
second countable multi-fuzzy topological spaces. Then their product
space $(X,\overset{n}{\underset{i=1}{\prod}}I_{i}^{X},\tau)$ is also second
countable. \end{Proposition}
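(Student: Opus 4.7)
The plan is to mimic the classical topological argument: take countable bases $\mathcal{B}_1\subseteq\tau_1$ and $\mathcal{B}_2\subseteq\tau_2$, and show that $\mathcal{B}:=\{B_1\times B_2:B_1\in\mathcal{B}_1,\,B_2\in\mathcal{B}_2\}$ is a countable open base for the product topology $\tau=\tau_1\times\tau_2$. Countability is immediate since $\mathcal{B}$ is in bijection with $\mathcal{B}_1\times\mathcal{B}_2$, a product of two countable sets. Membership in $\tau$ is also immediate: each $B_1\times B_2$ lies in the defining open base $\mathcal{F}=\{F\times G:F\in\tau_1,G\in\nu_2\}$ used in Definition \ref{mftvs3.5}, so $\mathcal{B}\subseteq\tau$.

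The substantive step is showing that every member of $\tau$ is a union of members of $\mathcal{B}$. Given $U\in\tau$, by Proposition \ref{mftvs3.4} and Definition \ref{mftvs3.5} there exist $F^\alpha\in\tau_1$, $G^\alpha\in\tau_2$ with $U=\sqcup_{\alpha}(F^\alpha\times G^\alpha)$. Since $\mathcal{B}_j$ is a base for $\tau_j$, each $F^\alpha=\sqcup_{p\in P_\alpha}B_{1,p}^\alpha$ and $G^\alpha=\sqcup_{q\in Q_\alpha}B_{2,q}^\alpha$ with $B_{1,p}^\alpha\in\mathcal{B}_1$, $B_{2,q}^\alpha\in\mathcal{B}_2$. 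The key identity I need is the distributive law
\[
\Bigl(\underset{p}{\sqcup}B_{1,p}^\alpha\Bigr)\times\Bigl(\underset{q}{\sqcup}B_{2,q}^\alpha\Bigr)=\underset{(p,q)}{\sqcup}\bigl(B_{1,p}^\alpha\times B_{2,q}^\alpha\bigr).
\]
Once this is in hand, substituting gives $U=\sqcup_{\alpha}\sqcup_{(p,q)\in P_\alpha\times Q_\alpha}B_{1,p}^\alpha\times B_{2,q}^\alpha$, a union of members of $\mathcal{B}$, which is exactly what the base condition in Definition \ref{mftvs2.2} requires.

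The distributive identity is the only computational step, and it is the main (though modest) obstacle. It reduces componentwise (for each coordinate $i=1,\dots,n$) to verifying, at an arbitrary point $(x,y)\in X_1\times X_2$, that
\[
\min\Bigl\{\underset{p}{\vee}\mu_{(B_{1,p}^\alpha)_i}(x),\,\underset{q}{\vee}\mu_{(B_{2,q}^\alpha)_i}(y)\Bigr\}=\underset{(p,q)}{\vee}\min\bigl\{\mu_{(B_{1,p}^\alpha)_i}(x),\,\mu_{(B_{2,q}^\alpha)_i}(y)\bigr\},
\]
which is the standard fact that $\min$ distributes over arbitrary suprema in $[0,1]$. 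One should also verify that each member of $\mathcal{B}$ actually lies in $\mathcal{M}_{X_1\times X_2}$, i.e.\ that its multi-membership at each point is either $\succ\hat{0}$ or $=\hat{0}$; this is automatic because $\min\{a,b\}>0$ iff $a>0$ and $b>0$, so the product of two members of $\mathcal{M}_{X_j}$ lies in $\mathcal{M}_{X_1\times X_2}$. Combining the countability, the inclusion $\mathcal{B}\subseteq\tau$, and the base property just established completes the proof that $(X,\overset{n}{\underset{i=1}{\prod}}I_{i}^{X},\tau)$ is second countable.
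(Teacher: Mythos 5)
Your proposal is correct and follows essentially the same route as the paper's proof: both reduce to the distributive identity $(\sqcup_p B_{1,p})\times(\sqcup_q B_{2,q})=\sqcup_{(p,q)}(B_{1,p}\times B_{2,q})$ applied to the decomposition of each basic open set $F\times G$ into base elements of $\tau_1$ and $\tau_2$. Your version is somewhat more careful than the paper's, spelling out the pointwise $\min$--$\sup$ verification and the membership of the products in $\mathcal{M}_{X_1\times X_2}$, but the underlying argument is identical.
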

\begin{proof} Let $\mathcal{B}_{1}=\{B_{j}:j\in J\}$ and $\mathcal{B}_{2}=\{B_{k}^{\prime}:k\in K\}$
be countable open base for $\tau_{1}$ and $\tau_{2}$ respectively.
Now, $\mathcal{B}=\{F\times G:F\in\tau_{1},G\in\tau_{2}\}$ is an
open base for $\tau.$ \\
Any $F\in\tau_{1},G\in\tau_{2}$ can be written as $F=\underset{j\in J_{F}}{\sqcup}B_{j}$
and $G=\underset{k\in K_{G}}{\sqcup}B_{k}^{\prime}$, for some $J_{F}\subseteq J$
and $K_{G}\subseteq K$. \\
Then $F\times G=\left(\underset{j\in J_{F}}{\sqcup}B_{j}\right)\sqcap\left(\underset{k\in K_{G}}{\sqcup}B_{k}^{\prime}\right)$\\
$=\underset{(j,k)\in J_{F}\times K_{G}}{\sqcup}(B_{j}\sqcap B_{k}^{\prime})$\\
$=\underset{(j,k)\in J_{F}\times K_{G}}{\sqcup}(B_{j}\times B_{k}^{\prime}).$\\
Since $J,K$ are countable, hence $\mathcal{B}$ is countable.\end{proof}
\begin{Definition} \label{mftvs3.14} Let $(X,\overset{n}{\underset{i=1}{\prod}}I_{i}^{X},\tau)$
be a multi-fuzzy topological space. A family $\mathcal{A}$ of multi-fuzzy
sets is a cover of a multi-fuzzy set $F$ if $F\sqsubseteq\sqcup\{A:A\in\mathcal{A}\}.$
It is an  open cover if each member of $\mathcal{A}$ is a multi-fuzzy open 
set. A subcover of $\mathcal{A}$ is a subfamily which is also a cover.\end{Definition}
\begin{Definition}\label{mftvs3.15} A multi-fuzzy topological space $(X,\overset{n}{\underset{i=1}{\prod}}I_{i}^{X},\tau)$
is said to be compact if each open cover of the space has a finite
sub-cover.\end{Definition}
\begin{Definition}\label{mftvs3.16} Let $X$ be a non-empty set and $Q$
be a subset of $X.$ A family $\mathcal{A}$ of multi-fuzzy sets is a cover
of $Q$ if $\underset{A\in\mathcal{A}}{sup}\mu_{A_{i}}(x)=1,\forall x\in X,\forall i=1,2,..,n.$\end{Definition}
\begin{Proposition} \label{mftvs3.17}Let $(X_{1},\overset{n}{\underset{i=1}{\prod}}I_{i}^{X_{1}},\tau_{1})$
and $(X_{2},\overset{n}{\underset{i=1}{\prod}}I_{i}^{X_{2}},\tau_{2})$ be two
compact multi-fuzzy topological spaces. Then their product space $(X,\overset{n}{\underset{i=1}{\prod}}I_{i}^{X},\tau)$
is also compact. \end{Proposition}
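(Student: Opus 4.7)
The plan is to mirror the classical Tube--Lemma proof of finite Tychonoff, adapted to the multi-fuzzy setting.

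First I would reduce to a basic open cover. Let $\mathcal{U}$ be an open cover of $X_1\times X_2$. By Definition~\ref{mftvs3.5} each $U\in\mathcal{U}$ is a union of basic open sets $F\times G$ with $F\in\tau_1$, $G\in\tau_2$, so collecting all such basic pieces sitting below members of $\mathcal{U}$ yields a refined open cover $\mathcal{V}=\{F^\alpha\times G^\alpha:\alpha\in\Lambda\}$ of $X_1\times X_2$. Any finite subcover of $\mathcal{V}$ lifts (by choosing a parent in $\mathcal{U}$ for each basic piece) to a finite subcover of $\mathcal{U}$; thus it suffices to extract a finite subcover from $\mathcal{V}$.

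Second, I would carry out the tube-cum-slicing step. Fix $x_1\in X_1$. The cover condition at $(x_1,x_2)$ in component $i$ reads $\sup_\alpha\min(\mu_{F^\alpha_i}(x_1),\mu_{G^\alpha_i}(x_2))=1$, so along any sequence realising this supremum both factors tend to $1$. Using this together with $F^\alpha\in\mathcal{M}_{X_1}$ I would argue that the sub-family $\mathcal{W}_{x_1}=\{G^\alpha:\mu_{F^\alpha}(x_1)\succ\hat{0}\}$ is itself an open cover of $X_2$; compactness of $(X_2,\tau_2)$ then yields a finite subcover $\{G^{\alpha_1},\ldots,G^{\alpha_k}\}$, with indices and $k$ depending on $x_1$. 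Set $N_{x_1}=F^{\alpha_1}\sqcap\cdots\sqcap F^{\alpha_k}\in\tau_1$ and verify the fuzzy Tube inclusion $N_{x_1}\times\bar{X}_2\sqsubseteq\underset{j}{\sqcup}(F^{\alpha_j}\times G^{\alpha_j})$: at $(y,x_2)$ and component $i$, since $\{G^{\alpha_j}\}_{j=1}^{k}$ is a finite cover of $X_2$ some $j_0$ attains $\mu_{G^{\alpha_{j_0}}_i}(x_2)=1$, whence the right-hand side is at least $\mu_{F^{\alpha_{j_0}}_i}(y)\ge\bigwedge_j\mu_{F^{\alpha_j}_i}(y)$, which equals the left-hand side.

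Third, I would apply compactness of $X_1$. Letting $x_1$ vary, the family $\{N_{x_1}:x_1\in X_1\}$ is an open cover of $X_1$ (after arranging the $\alpha_j(x_1)$'s so that the $F^{\alpha_j(x_1)}$ take values close to $1$ at $x_1$ in every component); compactness of $(X_1,\tau_1)$ then gives a finite subcover $\{N_{x_1^{(1)}},\ldots,N_{x_1^{(m)}}\}$. The finite collection of basic pieces $\{F^{\alpha_j^{(l)}}\times G^{\alpha_j^{(l)}}:1\le l\le m,\,1\le j\le k(x_1^{(l)})\}$ then covers $X_1\times X_2$; lifting each piece to a parent in $\mathcal{U}$ produces the required finite subcover.

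The main obstacle is the multi-fuzzy Tube Lemma of Step 2 together with the cover property of $\{N_{x_1}\}$ in Step 3. Because the Lowen cover condition requires the exact supremum $\bar{1}$ componentwise with no $\epsilon$-slack, a naive choice of the $\alpha_j(x_1)$'s only delivers $\mu_{N_{x_1}}(x_1)\succ\hat{0}$ rather than the $\hat{1}$ needed for $\{N_{x_1}\}$ to cover $X_1$. One must therefore refine the slicing argument so that the chosen $F^{\alpha_j(x_1)}$ attain values close to $1$ at $x_1$ in every one of the $n$ components simultaneously, exploiting the pointwise relation $\sup_\alpha\mu_{F^\alpha_i}(x_1)=1$ and the $\mathcal{M}_{X_1}$-constraint to synchronise the $n$ components. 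This simultaneous componentwise control has no analogue in the classical crisp case and is the technical crux of the proof.
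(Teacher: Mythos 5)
The paper states Proposition~\ref{mftvs3.17} without any proof, so there is nothing to compare your argument against; I can only judge it on its own terms. Your Steps 1 and 2 are sound: the reduction to the basic cover $\mathcal{V}$ is standard, the verification that $\mathcal{W}_{x_1}$ covers $X_2$ correctly uses the $\mathcal{M}_{X_1}$-condition (positivity of one component of $\mu_{F^{\alpha}}(x_1)$ forces positivity of all of them), and the tube inclusion $N_{x_1}\times\bar{X}_2\sqsubseteq\underset{j}{\sqcup}(F^{\alpha_j}\times G^{\alpha_j})$ is checked correctly.

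The difficulty you flag in Step 3 is, however, not a refinement to be supplied later --- it is where the argument actually breaks. For $\{N_{x_1}:x_1\in X_1\}$ to be an open cover of $X_1$ you need $\sup_{x_1}\mu_{(N_{x_1})_i}(y)=1$ for every $y$ and every $i$, whereas your construction only yields $\mu_{N_{x_1}}(x_1)\succ\hat{0}$. The repair you sketch --- choosing the $\alpha_j(x_1)$ so that $\mu_{F^{\alpha_j}_i}(x_1)$ is near $1$ in all components simultaneously --- is obstructed by the fact that the covering condition $\sup_{\alpha}\min(\mu_{F^{\alpha}_i}(x_1),\mu_{G^{\alpha}_i}(x_2))=1$ is imposed componentwise: the indices $\alpha$ that make component $i$ large at $(x_1,x_2)$ need not make component $i'$ large, and membership in $\mathcal{M}_{X_1}$ synchronises only \emph{positivity} of the components, never their proximity to $1$. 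Restricting to $\{\alpha:\mu_{F^{\alpha}_i}(x_1)>1-\epsilon\ \forall i\}$ may therefore destroy the covering property of the corresponding $G^{\alpha}$'s, so the tube built from a genuinely covering finite subfamily can have every component bounded away from $1$ at $x_1$. (Even granting the synchronisation, a single tube per point only gives $\sup\geq 1-\epsilon$; you would also have to index the tubes by $(x_1,m)$ with $\epsilon=1/m\rightarrow 0$ before invoking compactness of $X_1$.) There is also a structural issue behind the ``no $\epsilon$-slack'' you mention: since every non-null constant multi-fuzzy set is open by Definition~\ref{mftvs2.1}(i), the constants with all components $\overline{1-1/m}$, $m\geq 2$, form an open cover of any nonempty space with no finite subcover, so under Definitions~\ref{mftvs3.14}--\ref{mftvs3.16} no space is compact and the proposition is vacuous; a non-vacuous product theorem would require Lowen's $\epsilon$-relaxed compactness, under which your tube argument would acquire exactly the slack it currently lacks.
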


\end{document}